\definecolor{darkred}{rgb}{0.78,0.15,0.10}
\definecolor{darkblue}{rgb}{0.05, 0.15, 0.76}
\definecolor{darkgreen}{rgb}{0.18,0.60,0.12}
\definecolor{darkyellow}{rgb}{0.99,0.67,0.00}
\newtheorem{thm}{Theorem}[section]
\newtheorem{cor}[thm]{Corollary}
\newtheorem{lem}[thm]{Lemma}
\newtheorem{exa}[thm]{Example}
\newcommand{\C}{{\mathbb C}}
\newcommand{\Z}{{\mathbb Z}}
\newcommand{\cG}{{\mathcal G}}
\newcommand{\cT}{{\mathcal T}}
\newcommand{\cH}{{\mathcal H}}
\DeclareMathOperator{\arcsinh}{arcsinh}
\DeclareMathOperator{\arccosh}{arccosh}
\DeclareMathOperator{\area}{area}
\DeclareMathOperator{\sy}{sys}
\title{Short homology bases for hyperelliptic hyperbolic surfaces }
\author{Peter Buser, Eran Makover and Bjoern Muetzel}
\begin{document}
\maketitle

\begin{abstract}
Given a hyperelliptic hyperbolic surface $S$ of genus $g \geq 2$, we find bounds on the lengths of homologically independent loops on $S$. As a consequence, we show that for any $\lambda \in (0,1)$ there exists a constant $N(\lambda)$ such that every such surface has at least $\lceil \lambda \cdot \frac{2}{3} g \rceil$ homologically independent loops of length at most $N(\lambda)$, extending the result in \cite{mu1} and \cite{bps}. This allows us to extend the constant upper bound obtained in \cite{mu1} on the minimal length of non-zero period lattice vectors of hyperelliptic Riemann surfaces to almost $\frac{2}{3} g$ linearly independent vectors.\\
\\
Keywords : hyperbolic surfaces, hyperelliptic involution, negative curvature.\\
\\
Mathematics Subject Classifications (2020): 14H40, 14H42, 30F15 and 30F45.
\end{abstract}

\tableofcontents

\section{Introduction}
The most prominent short curve on a (Riemannian) surface is the \textit{systole}, which is a shortest non-contractible closed geodesic on the surface. By abuse of notation we shall also call its length $\sy(\cdot)$ the systole. Similarly the \textit{homology systole} $\sy_h(\cdot)$ is the length of a shortest non-separating simple closed geodesic. In this article we are interested in short curves on hyperelliptic Riemann surfaces, where a Riemann surface denotes a compact hyperbolic surface without boundary of genus $g \geq 2$. Questions about sets of short simple closed curves on Riemann surfaces date back to Bers \cite{be}. Every such surface can be decomposed into pairs of pants, i.e. into three-holed spheres, by cutting it  along $3g - 3$ simple closed non-intersecting geodesic curves. These curves can always be chosen in such a way that the upper bound of their hyperbolic lengths is of order $g$ (\cite{bse0}, \cite{pa1}). The best such upper bound is called the Bers' constant. The importance of these curves lies in the fact that they can be used to describe a rough fundamental domain for the moduli space $\mathcal{M}_g$ of Riemann surfaces of genus $g$ (see \cite[chapter 6.6]{bu}).\\
The same question can be asked about homology bases of Riemann surfaces. Here one would like to estimate the lengths of closed geodesic loops constituting a basis for the homology of a given genus $g \geq 2$ Riemann surface. This is related to the Schottky problem, which asks to characterize the sublocus of Jacobian varieties of Riemann surfaces of genus $g \geq 2$ among the principally polarized Abelian varieties of dimension $g$. The length estimates of the homology basis can be used to give such a characterization. As the mapping $t$ that assigns to a Riemann surface its Jacobian variety is injective, this is a problem which is similar to finding a rough fundamental domain of $\mathcal{M}_g$. Here we obtain a coarse domain in which $t(\mathcal{M}_g)$ lies. This study was initiated in \cite{bs} and extended in \cite{mu1}. Bounds for the whole basis were then given in \cite{bps}:

\begin{thm}[Balacheff-Parlier-Sabourau] Let $S$ be a hyperbolic surface of genus $g \geq 2$ with  homology systole $\sy_h(S)$. Then there exist $2g$ loops $\alpha_1, \ldots,\alpha_{2g}$ which induce a basis of $H_1(S,\Z)$ such that 
\[
\ell(\alpha_k) \leq C_0 \frac{ \log(2g-k+2)}{2g-k+1} \cdot g, \text{ \ where \ } C_0 = \frac{2^{16}}{min\{\sy_h(S),1\}}.
\]
Especially the median length $(k = g)$ is bounded by $C_0 \log(g+2)$ and the $(\alpha_k)_k$ are bounded by $C_0\cdot g$. 
\label{thm:bps}
\end{thm}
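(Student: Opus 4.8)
The plan is to split the argument into a clean combinatorial reduction and a harder geometric estimate. Combinatorially, build the basis greedily: choose closed geodesics $\alpha_1,\alpha_2,\dots$ with $\alpha_k$ a shortest closed geodesic whose class is linearly independent of $[\alpha_1],\dots,[\alpha_{k-1}]$ in $H_1(S,\Z)$. This produces $2g$ loops with $\ell(\alpha_1)\le\cdots\le\ell(\alpha_{2g})$, and, as usual, one may take the $\alpha_k$ so that the $[\alpha_k]$ form a $\Z$-basis of $H_1(S)$, not merely a $\Q$-basis. Put $m:=2g-k+1$, so the target is $\ell(\alpha_k)\le C_0\,\log(m+1)\,g/m$. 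Since $\operatorname{span}\{[\alpha_1],\dots,[\alpha_{k-1}]\}\subset H_1(S,\R)$ has dimension $k-1=2g-m$, the greedy choice reduces the theorem to exhibiting \emph{some} closed curve of length $\le L:=C_0\,\log(m+1)\,g/m$ whose class lies outside that span, and for that it suffices that
\[
\dim_{\R}\operatorname{span}\{\,[\gamma]:\ \ell(\gamma)\le L\,\}\ \ge\ 2g-m+1 .
\]
So, writing $c(L)$ for the codimension of the span of all loops of length $\le L$, everything comes down to the geometric estimate: \emph{if }$c(L)\ge m$\emph{ then }$L\lesssim g\,\log(m+1)/m$, with an extra factor $1/\sy_h(S)$ in the thin case $\sy_h(S)<1$ and all universal constants absorbed into $C_0$.

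The geometric estimate is where the work lies, and I would confront the $m$ ``missing'' homology dimensions with the two rigid features of a hyperbolic surface: its fixed area $\operatorname{area}(S)=4\pi(g-1)$, and the exponential growth of balls, equivalently the collar lemma (a geodesic of length $w$ carries a collar of width $\sim\log(1/w)$). Working at scale $L$, fix a maximal $\tfrac L2$-separated net $p_1,\dots,p_s$ in $S$; the balls $B(p_i,\tfrac L4)$ are then pairwise disjoint while the balls $B(p_i,\tfrac L2)$ cover $S$, so $H_1(S)$ is generated by the classes of loops contained in single cover balls together with the cycles of the nerve graph. Each cover ball that is not contractible produces a closed geodesic of length $\le L$, and bounding how many of the $m$ missing dimensions can escape being produced this way reduces the problem to estimating $s$, and the first Betti number of the nerve, against the area; it is precisely the exponential volume growth, rather than the polynomial $\sim L^2$ growth at small radius, that turns the resulting area bound into the shape $L\lesssim g\log(m+1)/m$. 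The thin case is reduced to the thick one by first cutting $S$ along short non-separating geodesics, which is where the factor $1/\min\{\sy_h(S),1\}$ enters, and for $L$ of order $g$ the covering argument is supplemented by an induction that repeatedly splits $S$ along a short separating geodesic into balanced pieces; the collar lemma keeps the valence of the nerve and the geometry near the thin parts uniformly under control.

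The real obstacle is the homological rather than merely topological bookkeeping inside this scheme. A single cover ball (or a single pair of pants, if one routes the argument through a short pants decomposition) can be topologically rich, swallowing an entire handle, or it can be an annulus around a \emph{separating} geodesic that contributes nothing to $H_1$, so one must extract from the $c(L)$ missing dimensions exactly $c(L)$ genuinely disjoint essential pieces, each of hyperbolic area bounded below in terms of $L$, and verify that the short geodesics coming from distinct pieces are independent in $H_1(S,\Z)$. Doing this sharply, rather than settling for a bound of order $g$ (or $\sqrt g$) valid for all $k$ at once, is precisely what produces the profile $\log(2g-k+2)/(2g-k+1)$; the explicit constant $2^{16}$ and the factor $1/\min\{\sy_h(S),1\}$ then emerge from tracking the losses in the covering, collar and volume estimates.
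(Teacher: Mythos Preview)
This theorem is not proved in the present paper. It is quoted in the introduction as a result of Balacheff--Parlier--Sabourau \cite{bps} and serves only as background and motivation for the authors' own results on hyperelliptic surfaces (Theorems~\ref{thm:hyper_length_intro}--\ref{thm:hyper_Jac_intro}). There is therefore no ``paper's own proof'' to compare your proposal against.

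As to your outline itself: what you sketch is in the right spirit --- a greedy construction reducing the question to a codimension estimate, then an area/covering argument at scale $L$ using a net, the nerve of the cover, and exponential volume growth, with the thin part handled via the collar lemma and the factor $1/\min\{\sy_h(S),1\}$. This is indeed close to the architecture of \cite{bps}. But your write-up is candidly a strategy rather than a proof: the paragraph beginning ``The real obstacle\dots'' correctly identifies the hard step (extracting from the $c(L)$ missing homology dimensions that many genuinely disjoint essential pieces, each with a definite area lower bound in terms of $L$, and certifying that the associated short geodesics are linearly independent in $H_1$), and you do not carry it out. In particular, the passage from ``the cover balls that are not contractible each produce a geodesic of length $\le L$'' to the quantitative profile $L \lesssim g\log(m+1)/m$ requires a careful count of how many homology dimensions a single ball or nerve cycle can absorb, together with the inductive splitting argument you allude to; none of this is actually executed. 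So as submitted this is an informed plan, not a proof, and since the paper under review does not attempt a proof of this statement either, there is nothing further to compare.
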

Given a lower bound on the homology systole of $S$, bounds of this order extend directly to the squared lengths of almost $g$ linearly independent vectors in the lattice of the Jacobian of $S$. The length bound of this result for the Jacobian was improved in \cite{hw}. Our main goal in this article is to give a similar characterization for hyperelliptic Riemann surfaces. We show that in terms of these geometric invariants an even lower, constant bound can be found for a large portion of a suitable homology basis. 

\begin{thm} Let $S$ be a hyperelliptic Riemann surface of genus $g \geq 2$. Then there exist $\lceil  \frac{2g+2}{3}  \rceil$ geodesic loops  $(\alpha_k)_{k=1,\ldots,\left \lceil \frac{2g+2}{3}\right \rceil}$ that can be extended to a homology basis of $H_1(S,\Z)$ such that
\begin{equation*}
\ell(\alpha_{k}) \leq  4\log \left(\frac{ 12(g-1) }{ 2g+5-3k} +2 \right) \text{ for all } k=1, \dots, \left\lceil \frac{2g+2}{3}\right \rceil.
\end{equation*}
\label{thm:hyper_length_intro}
\end{thm}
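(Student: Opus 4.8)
The plan is to exploit the special structure of hyperelliptic surfaces: such a surface $S$ is a two-sheeted branched cover of the sphere $\Sp^2$ (with the quotient hyperbolic orbifold metric) ramified over $2g+2$ points, and the hyperelliptic involution $J$ acts on $H_1(S,\Z)$ as $-\mathrm{id}$. I would first pass to the quotient orbifold $\mathcal{O} = S/J$, a sphere with $2g+2$ cone points of order $2$, and choose a short pants-type decomposition or, more efficiently, a short \emph{tree} connecting the branch points. The key geometric input is that the quotient orbifold has bounded area (it is $\tfrac{1}{2}\area(S) = 2\pi(g-1)$), so by a covering/counting argument one can find many branch points that are pairwise close: concretely, one wants a collection of short arcs on $\mathcal{O}$ joining branch points whose preimages in $S$ are closed geodesics, and the lengths of these arcs are controlled by how densely the $2g+2$ branch points sit in an area-$2\pi(g-1)$ orbifold.

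**Constructing the loops.** For each arc $\delta$ on $\mathcal{O}$ joining two branch points, its full preimage under the covering $S \to \mathcal{O}$ is a single loop $\alpha$ with $\ell(\alpha) = 2\ell(\delta)$, and such loops are non-separating; moreover a carefully chosen collection of such arcs (e.g. forming a forest on the branch-point set, or arcs pairing up disjoint sets of branch points) lifts to a collection of loops that is homologically independent in $H_1(S,\Z)$. This is the standard description of cycles on hyperelliptic curves via partitions of the branch locus. I would order the branch points greedily so that the $k$-th arc $\delta_k$ can be taken to have length bounded in terms of the "$k$-th nearest neighbour'' distance among the remaining branch points; since roughly $\tfrac{2}{3}$ of the $2g+2$ branch points can be handled before the independence/disjointness constraints run out, this yields the count $\lceil \tfrac{2g+2}{3}\rceil$. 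The bound $\ell(\delta_k) \le 2\log\!\big(\tfrac{12(g-1)}{2g+5-3k}+2\big)$ should come from a Bers-type or collar-type packing estimate: in an area-$A$ hyperbolic orbifold, among $m$ marked points one finds two within distance essentially $2\,\mathrm{arcsinh}$ of something like $\sqrt{A/m}$, i.e. logarithmic in $A$ divided by the number of points still available, and the denominator $2g+5-3k$ tracks how many branch points remain usable at step $k$.

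**The main obstacle.** The hard part will be making the combinatorics of the selection process yield \emph{both} homological independence \emph{and} the precise constant $4$ and the exact denominator $2g+5-3k$ simultaneously. Homological independence forces the arcs to be chosen so that no cycle among them bounds (in terms of branch-point partitions, the symmetric-difference condition), and this is precisely why only about $\tfrac{2}{3}$ of the branch points are available rather than all of them — each new independent loop "uses up'' branch points at a rate that consumes the budget three times as fast as naively expected near the end of the process. Turning the area bound $2\pi(g-1)$ into the clean logarithmic bound will require a quantitative covering lemma: cover $\mathcal{O}$ by embedded balls (away from the thin parts, handled separately via collar lemmas), count how many branch points a ball of radius $r$ can isolate, and invert to get $r$ as a function of the number of "free'' branch points at each stage; the factor $4$ then emerges as $2$ (from the degree-$2$ cover doubling lengths) times the $2$ in the $2\,\mathrm{arcsinh}/\log$ estimate. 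I would expect to prove the length bound for arcs on $\mathcal{O}$ first in a self-contained packing lemma, then lift, and finally verify independence by an explicit homology computation on the hyperelliptic model.
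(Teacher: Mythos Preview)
Your framework matches the paper's: pass to the quotient orbifold $\Sigma=S/\phi$, grow metric disks around the $2g+2$ cone points, and use the area bound $\area(\Sigma)=2\pi(g-1)$ to control the radius at which disks first collide; each collision produces a short arc whose lift in $S$ is a short non-separating closed geodesic. The factor $4$ arises exactly as you say.

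The real gap is in your account of the combinatorics. You assume the arcs can be taken to form a forest on the branch points, and you attribute the $\tfrac{2}{3}$ to a consumption rate during the greedy process. Neither is correct. In the disk-growing procedure a disk may \emph{self-intersect} before touching any other disk; this produces a \emph{loop} at a single cone point rather than an edge, and its lift is a figure-eight geodesic, not a simple closed curve. The resulting graph $\cG$ on $\Sigma$ therefore has components that are trees, loops, or looped trees, and the lifts $\cG^{\#}$ are typically \emph{not} homologically independent: a loop on $\Sigma$ that encloses an even number of cone points lifts to something separating. The paper's criterion (their Lemma~3.2 / Corollary~3.4) is that $\cH^{\#}$ is a partial basis iff every complementary region of $\cH$ in $\Sigma$ contains a simple closed curve separating the cone points into two odd sets---equivalently, every region must contain an isolated cone point. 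Achieving this requires a \emph{pruning algorithm} that deletes certain loops and edges so that each region acquires a free vertex; the careful bookkeeping (organising components into ``paired blocks'' of $\geq 2$ vertices carrying $\geq 1$ arc, and singleton blocks) is what yields the ratio $\tfrac{1}{3}$ of surviving arcs to vertices, hence $\lceil\tfrac{2g+2}{3}\rceil$ curves. The denominator $2g+5-3k$ then comes from combining the pruning count with the monotone bound $j_{m_k}\le 2g+2-\kappa(g)+k$ on consumed disks. Your sketch does not anticipate the self-intersection case or the pruning step, and the ``three times as fast'' heuristic does not reflect the actual mechanism; without these, the argument would not close. No separate thin-part analysis via collars is needed, incidentally---the area estimate works globally on the orbifold.
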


Summarizing this result for a fraction of the homology basis, we have:

\begin{cor} Let $S$ be a hyperelliptic Riemann surface of genus $g \geq 2$. Then for any $\lambda \in (0,1)$ there exist $\lceil \lambda \cdot \frac{2}{3} g \rceil$ geodesic loops  $(\alpha_k)_{k=1,\ldots,\left \lceil \lambda \cdot \frac{2}{3}g \right \rceil}$, that can be extended to a homology basis of $H_1(S,\Z)$ such that 
\[
\ell(\alpha_k) \leq N(\lambda): =  4\log \left( \frac{ 6}{ 1-\lambda} +2 \right)   \text{ for all } k \in \{1 ,\ldots, \left \lceil   \lambda \cdot \frac{2}{3}g \right \rceil \}.
\]
\label{thm:hyper_lengths_cor_intro}
\end{cor}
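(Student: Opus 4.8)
The plan is to deduce the corollary directly from Theorem~\ref{thm:hyper_length_intro} by restricting to the first $m := \lceil \lambda \cdot \tfrac{2}{3}g\rceil$ of the loops it furnishes and checking that their length bound is dominated by the constant $N(\lambda)$.

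First I would observe that, since $\lambda \in (0,1)$, we have $\lambda \cdot \tfrac{2}{3}g \le \tfrac{2}{3}g \le \tfrac{2g+2}{3}$, hence $m = \lceil \lambda \cdot \tfrac{2}{3}g\rceil \le \lceil \tfrac{2g+2}{3}\rceil$; thus Theorem~\ref{thm:hyper_length_intro} indeed supplies geodesic loops $\alpha_1,\dots,\alpha_m$ which can be completed to a homology basis of $H_1(S,\Z)$. It remains to bound their lengths by $N(\lambda)$.

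Next, the map $k \mapsto \frac{12(g-1)}{2g+5-3k}$ is increasing in $k$ as long as the denominator stays positive, so among $k=1,\dots,m$ the bound in Theorem~\ref{thm:hyper_length_intro} is largest at $k=m$, and it suffices to show $4\log\!\big(\frac{12(g-1)}{2g+5-3m}+2\big) \le N(\lambda)$. Using $m < \lambda \cdot \tfrac{2}{3}g + 1$ one gets $3m < 2\lambda g + 3$, whence $2g+5-3m > 2(1-\lambda)g + 2 > 0$; this confirms the denominator is positive (which also justifies the monotonicity just used). Then
\[
\frac{12(g-1)}{2g+5-3m} < \frac{12(g-1)}{2(1-\lambda)g+2} = \frac{6(g-1)}{(1-\lambda)g+1} < \frac{6(g-1)}{(1-\lambda)g} \le \frac{6}{1-\lambda},
\]
and since $\log$ is increasing the desired inequality $\ell(\alpha_k)\le 4\log\big(\frac{6}{1-\lambda}+2\big)=N(\lambda)$ follows for every $k\le m$.

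The argument has no genuine obstacle; it is purely bookkeeping. The only two points requiring a little care are (i) verifying that the ceiling $m$ does not exceed $\lceil \tfrac{2g+2}{3}\rceil$, so that enough loops from Theorem~\ref{thm:hyper_length_intro} are available, and (ii) controlling $2g+5-3m$ after replacing the real number $\lambda\cdot\tfrac{2}{3}g$ by its ceiling: the extra slack of $1$ coming from the ceiling contributes $-3$ to the denominator and must be absorbed into the constant $+5$, leaving the denominator $>2(1-\lambda)g+2>0$ and the estimate valid uniformly in $g\ge 2$ and in $\lambda\in(0,1)$.
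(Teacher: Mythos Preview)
Your argument is correct and follows essentially the same route as the paper: deduce the corollary from Theorem~\ref{thm:hyper_length_intro} by restricting to $k \le \lceil \lambda\cdot\tfrac{2g}{3}\rceil$, using $\lceil \lambda\cdot\tfrac{2g}{3}\rceil < \lambda\cdot\tfrac{2g}{3}+1$ to bound the denominator $2g+5-3k$ from below, and then simplifying to obtain $N(\lambda)$. The paper compresses this into a single sentence with the slightly coarser intermediate bound $2g+5-3k > (1-\lambda)(2g+2)$, but the mechanism and conclusion are identical.
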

Again this result can be extended to a set of vectors of a lattice basis of the Jacobian $J(S)$ of a hyperelliptic surface.

\begin{thm} Let $S$ be a hyperelliptic hyperbolic surface of genus $g \geq 2$. Then for any $\lambda \in (0,1)$ there exist $\lceil \lambda \cdot \frac{2}{3} g \rceil$ linearly independent vectors $(v_k)_{k=1,\ldots,\left \lceil \lambda \cdot \frac{2}{3}g \right \rceil}$  in the lattice of the Jacobian torus $J(S)$ that can be extended to a lattice basis, such that
\[
  \|v_k\|^2 = E(\sigma_k) \leq D(\lambda) := \frac{N(\lambda)}{\pi-2 \cdot \arcsin(\frac{1}{\cosh(w(\lambda)})}   \text{ \ for all \ } k \in \{1 ,\ldots, \left \lceil \lambda \cdot \frac{2}{3}g \right \rceil \}.
\]
\label{thm:hyper_Jac_intro}
\end{thm}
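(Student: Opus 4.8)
The plan is to deduce Theorem~\ref{thm:hyper_Jac_intro} from Corollary~\ref{thm:hyper_lengths_cor_intro} by the classical device of \cite{bs} (see also \cite{mu1}): one bounds the period lattice vector of a homology class by the Dirichlet energy of a closed one-form concentrated in a collar of its Poincar\'e dual. Recall that $J(S)=\C^{g}/\Lambda$, where $\Lambda$ is the image of $H_{1}(S,\Z)$ under the period map, which is an injective homomorphism onto $\Lambda$, and the norm on $\C^{g}$ comes from the Hodge inner product $\scp{\omega}{\omega'}=\tfrac{i}{2}\int_{S}\omega\wedge\overline{\omega'}$ on holomorphic one-forms. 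With the normalisation fixed in \cite{bs}, \cite{mu1}, a class $\gamma\in H_{1}(S,\Z)$ has $\|v_{\gamma}\|^{2}=E(\sigma_{\gamma})$, where $\sigma_{\gamma}$ is the harmonic representative of $\mathrm{PD}(\gamma)$ and $E(\sigma_{\gamma})=\int_{S}\sigma_{\gamma}\wedge*\sigma_{\gamma}$ is the least Dirichlet energy among closed one-forms in that class. Since the period map is a $\Z$-module isomorphism from $H_{1}(S,\Z)$ onto $\Lambda$, any family that extends to a $\Z$-basis of $H_{1}(S,\Z)$ is sent to an $\R$-linearly independent family that extends to a $\Z$-basis of $\Lambda$. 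Hence it is enough to bound $E(\sigma_{k})$ for the loops $\alpha_{1},\dots,\alpha_{\lceil \lambda\cdot\frac{2}{3} g\rceil}$ furnished by Corollary~\ref{thm:hyper_lengths_cor_intro}, which already extend to a homology basis and satisfy $\ell(\alpha_{k})\le N(\lambda)$; here $v_{k}=v_{\alpha_{k}}$ and $\sigma_{k}=\sigma_{\alpha_{k}}$.

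For each $k$, write $\ell_{k}=\ell(\alpha_{k})$ and regard $\alpha_{k}$ as a simple closed geodesic. By the Collar Lemma it has an embedded collar $\mathcal{C}_{k}$ of half-width $w_{k}=\arcsinh\!\big(1/\sinh(\ell_{k}/2)\big)$; in Fermi coordinates $(\rho,t)$ with $\rho\in[-w_{k},w_{k}]$ and $t\in\R/\ell_{k}\Z$ the metric there is $d\rho^{2}+\cosh^{2}\!\rho\,dt^{2}$. I would then take a cutoff $\chi_{k}\colon[-w_{k},w_{k}]\to[0,1]$ that is locally constant near $\pm w_{k}$ with $\chi_{k}(-w_{k})=0$ and $\chi_{k}(w_{k})=1$, so that $\eta_{k}:=\chi_{k}'(\rho)\,d\rho$ on $\mathcal{C}_{k}$, extended off $\mathcal{C}_{k}$ by the matching locally constant function, is a closed one-form representing $\mathrm{PD}(\alpha_{k})$. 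Since $|d\rho|^{2}=1$ and the area element is $\cosh\rho\,d\rho\,dt$,
\[
E(\eta_{k})=\ell_{k}\int_{-w_{k}}^{w_{k}}\chi_{k}'(\rho)^{2}\cosh\rho\,d\rho ,
\]
and optimising $\chi_{k}$ (Cauchy--Schwarz, equality at $\chi_{k}'\propto 1/\cosh\rho$), together with $\int_{-w_{k}}^{w_{k}}\!\frac{d\rho}{\cosh\rho}=\pi-2\arcsin\!\big(1/\cosh w_{k}\big)$, yields
\[
\|v_{\alpha_{k}}\|^{2}=E(\sigma_{k})\ \le\ E(\eta_{k})\ \le\ \frac{\ell_{k}}{\pi-2\arcsin\!\big(1/\cosh w_{k}\big)} .
\]
Finally $\ell_{k}\le N(\lambda)$ and, by monotonicity of the Collar Lemma half-width in the length, $w_{k}\ge w(\lambda)$, hence $\arcsin(1/\cosh w_{k})\le\arcsin(1/\cosh w(\lambda))$ and the right-hand side is at most $N(\lambda)/\big(\pi-2\arcsin(1/\cosh w(\lambda))\big)=D(\lambda)$, where $w(\lambda)$ is the collar half-width of a geodesic of length $N(\lambda)$. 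Combined with the reduction of the first paragraph, this gives the theorem.

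The main thing to get right is that the geometry used in the energy estimate really applies to the curves the length theorem delivers. If the $\alpha_{k}$ coming out of the hyperelliptic construction are not plain simple closed geodesics -- say loops around Weierstrass points, or curves bounding only an embedded ``half-collar'' inherited from the quotient orbifold -- then the Collar Lemma half-width above must be replaced by the modulus of the cylinder genuinely produced there, and $w(\lambda)$ in the statement should be read accordingly. Tracking that modulus, and pinning down the precise normalisation in the identity $\|v_{\gamma}\|^{2}=E(\sigma_{\gamma})$ (the factor $\tfrac{i}{2}$ and the Poincar\'e duality conventions), are the points that need care; the collar computation itself is routine.
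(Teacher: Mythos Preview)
Your argument is correct and follows essentially the same route as the paper: invoke Corollary~\ref{thm:hyper_lengths_cor_intro} for the short loops, apply the Collar Lemma, and bound $E(\sigma_k)$ by the Buser--Sarnak collar estimate \eqref{eq:capa_omega}; you simply spell out the derivation of that estimate where the paper cites \cite{bs}. Your caveat in the final paragraph is unnecessary here---the construction in Section~\ref{sec:loops} guarantees that each $\alpha_k$ is a genuine simple closed geodesic on $S$ (either the lift of an edge, or the geodesic in the free homotopy class of one loop of a figure-eight lift), so the standard Collar Lemma applies without modification.
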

Note that this result gives again an upper bound $D(\lambda)$ which is a constant that depends only on $\lambda$. Concerning short curves on hyperelliptic hyperbolic surfaces, results about the decomposition into three-holed spheres have been obtained in \cite{bp}. There the authors show that for hyperelliptic surfaces of genus $g$, Bers' constant is at most of order $\sqrt{g}$. Concerning the systole and homology systole of hyperelliptic surfaces results have been obtained in \cite{ba2} and \cite{mu1}. In \cite{ba2} Bavard gives a constant upper bound for the systole of a hyperelliptic Riemann surface which is independent of the genus. This bound is sharp in genus $2$ and $5$. In \cite{mu1} it is shown that the same upper bound holds for the homology systole and the first inequality for the length of a non-zero lattice vector of the Jacobian in Theorem \ref{thm:hyper_Jac_intro}. The theorem restricts the sublocus of Jacobians of hyperelliptic surfaces among the principally polarized Abelian varieties even further. 

The methods used in this article are a continuation of the approach in \cite{mu1}. Let 

\begin{equation*}
\Sigma = S\backslash \phi
\end{equation*}

be the quotient sphere obtained by taking the quotient of the hyperelliptic surface $S$ with respect to the hyperelliptic involution $\phi$. In a first step we let disks grow on the quotient sphere to get upper bounds for curves using an area argument. In a second step we examine our result more closely and remove unwanted curves, to obtain a set of homologically independent curves and prove the length estimates. Finally we show how these estimates can be used to get bounds of the same order for the vectors of a lattice basis of the Jacobian of a hyperelliptic hyperbolic surface. 

\section{Short loops on hyperelliptic Riemann surfaces} \label{sec:loops}

Let $S$ be a hyperelliptic surface of genus $g \geq 2$ with hyperelliptic involution $\phi : S \to S$ and let $p_1^*, \dots, p_{2g+2}^*$ be its fixed points. The quotient $\Sigma$ is a topological sphere with $2g+2$ cone points of cone angle $\pi$ whose vertices $\{p_i\}_{i=1,\ldots,2g+2}$ are the images of the fixed points under the natural projection $\Pi : S\to \Sigma$ (see \textit{Figure \ref{fig:hyp})}. In complex analysis the fixed points of $\phi$ are known to be the Weierstrass points of $S$. We adopt this terminology for the fixed points $p_i^*$ on $S$ as well as the cone points $p_i$ on $\Sigma$.

\begin{figure}[!ht]
\SetLabels
\L(.43*.25) $p_1$\\
\L(.42*.77) $p_1^*$\\
\L(.31*.25) $p_2$\\
\L(.35*.77) $p_2^*$\\
\L(.55*.25) $p_{10}$\\
\L(.66*.25) $p_9$\\
\L(.78*.25) $p_8$\\
\L(.88*.25) $p_7$\\
\L(.95*.25) $p_6$\\
\L(.02*.25) $p_5$\\
\L(.09*.25) $p_4$\\
\L(.20*.25) $p_3$\\
\L(.52*.75) $\beta$\\
\L(.36*.90) $\alpha'$\\
\L(.36*.65) $\alpha''$\\
\L(.42*.12) $\delta_1$\\
\L(.45*.01) $q$\\
\L(.46*.74) $\delta^*_1$\\
\L(.49*.50) $S$\\
\L(.48*.35) $\Sigma$\\
\endSetLabels
\AffixLabels{%
\centerline{%
\includegraphics[height=6cm,width=15cm]{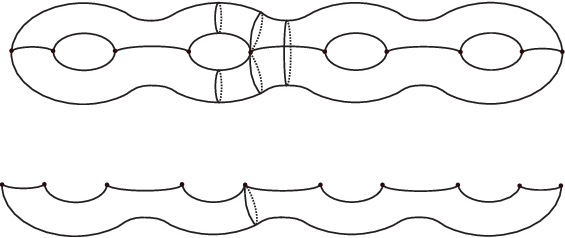}}}

\caption{A hyperelliptic surface $S$ of genus four with ten Weierstrass points.}
\label{fig:hyp}
\end{figure}

In the following process we will first construct a collection of short curves on $\Sigma$. Under $\Pi$ these curves will lift to short loops in $S$. As a result we obtain upper bounds on a certain number $M$ of short non-separating simple closed geodesics on $S$, where $g+1 \leq M \leq 2g+2$.  The number $M$ itself will depend on how the process evolves.

Let $B_r(p_i)$ denote a metric disk of radius $r$ around $p_i$. As long as the $\left( B_r(p_i) \right)_{i=1,\ldots,2g+2} $ are topological disks and mutually disjoint 
their areas equal half the area of a disk of radius $r$ in the hyperbolic plane
\[
\area ( B_r(p_i)) = \pi(\cosh(r)-1),
\] 
and the sum of these areas does not exceed the area of $\Sigma$. 

The idea is to expand the radii of these disks successively until they touch i.e. fail to be embedded. This will give us in each step a curve between Weierstrass points together with an upper bound on the length based on the area of the respective disks. We distinguish the following two cases that mark the interruption of the growing process at the end of each step: two different disks touch each other or a single disk self-intersects. In the first case, we obtain a geodesic arc between two Weierstrass points, which we will call an \textit{edge}, in the second case we obtain a geodesic loop which we will simply call a \textit{loop}. Furthermore, edges and loops will be called \textit{arcs}. We chose this terminology as we will later construct a graph out of these arcs. If several disks happen to come to touch at the same time, we can choose an arbitrary order to turn such a case into a succession of new edges and loops.\\

We start with the first step of our algorithm as follows:\\ 
\\
\textbf{Step 1:} We expand the radii of the $2g+2$ disks simultaneously until one of the following two cases occurs:\\
\begin{itemize}
\item[Case i)] \textit{ the closure of a single disk with radius $r_1$ comes to self-intersect in some point $q$.}\\
\\
We assume without loss of generality that this happens to the closure of $B_{r_1}(p_1)$ (see \textit{Figure \ref{fig:hyp}}). We connect $p_1$ with $q$ with two geodesic arcs of length $r_1$ that meet in $q$ at an angle $\pi$. The two arcs together form a geodesic loop $\delta_1$ (see \textit{Figure \ref{fig:hyp}}). It was shown in \cite{mu1} that $\delta_1$ lifts  to a figure eight closed geodesic $\delta^*_1$ of length $2\ell(\delta_1) = 4r_1$. Splitting the figure eight geodesic at the lift $p^*_1$ into two loops there are two non-separating simple closed geodesics $\alpha'$ and $\alpha''$ in the free homotopy classes of these two loops, where 
\[ 
    \ell(\alpha') = \ell(\alpha'') < \ell(\delta_1) = 2r_1. 
\]
Note that the figure eight geodesic can also be split in another way to get a simple closed geodesic $\beta$.\\

\item[Case ii)] \textit{ the closures of two different disks with radius $r_1$ come to intersect.}\\
\\
In this case we assume that the closures $\overline{B_{r_1}(p_1)}$ and $\overline{B_{r_1}(p_2)}$ are intersecting in a point $q$. We connect $q$ with geodesic arcs of length $r_1$ with $p_1$ and $p_2$. Again we call the union of these two arcs $\delta_1$. For this case it was shown in \cite{mu1} that $\delta_1$ lifts to a non-separating simple closed geodesic $\alpha$ of length 
\[
 \ell(\alpha) = 2 \ell(\delta_1) = 4r_1.
\]
\end{itemize}
In both Case i) and Case ii) we have that
\[
\area(\biguplus_{i=1}^{2g+2}  B_{r_1}(p_i)) = \sum_{i=1}^{2g+2} \area(B_{r_1}(p_i)) = (2g+2) \pi  (\cosh(r_1)-1).
\]
As the union of the disks $\biguplus_{i=1}^{2g+2} B_{r_1}(p_i)$ is embedded in $ \Sigma $, we have furthermore 
\[
\area(\biguplus_{i=1}^{2g+2} B_{r_1}(p_i)) \leq \area(\Sigma) = 2\pi(g-1). 
\]
As $\frac{g-1}{g+1} < 1$ we conclude, that $r_1 < \arccosh(2)$. As $\ell(\delta_1) \leq 2r_1$ we obtain from the above inequality an upper bound for $\ell(\delta_1)$:
\[
\ell(\delta_1) \leq  2\arccosh(2).
\]
Now, if we are in Case i) we set $\alpha_1 := \alpha'$ and in Case ii) we set $\alpha_1 := \alpha$. In both cases $\alpha_1$ is a non-separating closed geodesic and therefore homologically non-trivial. Hence we obtain for the homology systole $\sy_h(S)$: 
\[ 
 \sy_h(S)  \leq  \ell(\alpha_1) \leq 2 \ell(\delta_1) \leq 4 r_1 \leq  4\arccosh(2) = 5.2678...
\]
It is noteworthy that by a refinement of this area estimate Bavard obtains a better upper bound in \cite{ba2}, which is
\begin{equation}
 \sy_h(S) \leq 4\arccosh \left( {\left(
2\sin \left( {\tfrac{{\pi (g + 1)}}
{{12g}}} \right)\right)}^{-1} \right) < 2\log(3+2\sqrt{3}+2\sqrt{5+3\sqrt{3}})=5.1067...
\label{eq:bav}
\end{equation}
We denote by $k_1 \in \{1,2\}$ the number of disks (or Weierstrass points) used to obtain the geodesic arc $\delta_1$.\\ 
\\
\textbf{Step 2:} We now expand the  $2g +2 -k_1$ remaining disks $(B_{r_1}(p_i))_{i= k_1 +1 ,\ldots, 2g+2}$ until 
\begin{itemize}
\item[Case i)] \textit{ the closure of a single disk self-intersects.}\\
\\
In this case let without loss of generality $\overline{B_{r_2}(p_{k_1 +1})}$ be the disk that self-intersects at radius $r_2$. As in Case i) of \textbf{Step 1},  the arc $\delta_2$ connecting $p_{k_1+1}$ through the intersection point of the two disks lifts to a figure eight geodesic, and splitting this geodesic we get a non-separating simple closed geodesic $\alpha_2$ of length $\ell(\alpha_2) \leq 2r_2$.\\ 

\item[Case ii)] \textit{two different disks, either both with radius $r_2$, or one with radius $r_1$ and the other with radius $r_2$ intersect.}\\
\\
Here we assume that  $\overline{B_{r_2}(p_{k_1+1})}$ and  $\overline{B_{r_2}(p_{k_1+2})}$ intersect in the first case respectively, $\overline{B_{r_2}(p_{k_1+1})}$ and  $\overline{B_{r_1}(p_1)}$ in the second case. 
\end{itemize}
In both Case i) or Case ii), we connect the respective Weierstrass points with a geodesic arc $\delta_2 \neq \delta_1$ of length $\ell(\delta_2) \leq2r_2$. As in \textbf{Step 1}, we obtain an upper bound on $r_2 \geq r_1$, as all disks are embedded. In the worst case we get the estimate 
\[
 \sum_{i=k_1+1}^{2g+2} \area(B_{r_2}(p_i)) \leq \area(\Sigma).
\]  
As $\sum_{i=k_1+1}^{2g+2} \area(B_{r_2}(p_i)) = (2g +2 -k_1) \pi \cdot (\cosh(r_2)-1)$ we get that
\[
(2g +2 -k_1) \cdot \pi \cdot (\cosh(r_2)-1)  \leq   \area(\Sigma)  \text{ \ or \ }                  
      r_2 \leq \arccosh\left(\frac{ 2(g-1) }{2g +2 -k_1} +1 \right)
\]
Again let $k_2 \in \{1,2\}$ be the number of disks (or Weierstrass points) used to obtain the geodesic arc $\delta_2$. 

We proceed in this way by expanding in each step the remaining disks further. The number of disks expanding in \textbf{Step \!$\boldsymbol{m}$} is  $2g+2-\sum_{i=1}^{m-1} k_i$. We expand these disks until a single disk self-intersects, in which case we ``consume'' another Weierstrass point or until two different disks intersect in which case we consume two Weierstrass points. In each step we obtain a new geodesic arc connecting one or two Weierstrass points together with an upper bound of its length. The $m$-th step is then:\\
\\
\textbf{Step \!$\boldsymbol{m}$:}  We obtain a geodesic arc $\delta_m$ of length $\ell(\delta_m) \leq 2r_m$ by connecting the respective Weierstrass point or points. As in \textbf{Step 2}, we obtain an upper bound on $r_m$. In the worst case we get

\[
   \sum_{i=j_m+1}^{2g+2} \area(B_{r_m}(p_i)) \leq \area(\Sigma), \text{ \ where \ } j_m := \sum_{i=1}^{m-1} k_i .
\]  
Here $j_m$ is the number of disks consumed in the steps preceding \textbf{Step m}. In this step we get
\begin{eqnarray}
\nonumber
 (2g+2-j_m) \cdot \pi \cdot (\cosh(r_m)-1)  \leq   \area(\Sigma)  \text{ \ or \ }   \\               
 \frac{ \ell(\delta_m) }{2}  = r_m \leq \arccosh\left(\frac{ 2(g-1) }{ 2g+2-j_m} +1 \right).
\label{eq:deltam}
\end{eqnarray}
Each $\delta_m$ obtained in the process leads to a simple closed non-separating geodesic $\alpha_m$ on $S$: if the lift of $\delta_m$ in $S$ is simple closed going through two Weierstrass points we take $\alpha_m$ to be this lift, otherwise the lift is a figure eight geodesic going through exactly one Weierstrass point and we choose $\alpha_m$ to be homotopic to one of the two loops (we cannot use both loops because the pair is separating). In the first case $\alpha_m$ has length $2\ell(\delta_m)$ in the second case the length is smaller than $\ell(\delta_m)$. Simplifying $\arccosh(x) \leq \log(2x)$ and using \eqref{eq:deltam} we obtain the upper bound for the length

\begin{equation}
\ell(\alpha_m) \leq 4\log \left(\frac{ 4(g-1) }{ 2g+2-j_m} +2 \right).
\label{eq:alpham}
\end{equation}

\textbf{Termination:} We halt the process once all disks are expanded in Step $M$, where $g+1 \leq M \leq 2g+2$. As the estimate in Equation \eqref{eq:alpham} uses the number $j_m$ of disks consumed up to \textbf{Step m-1}, this equation is then valid for $m=1, \dots, M $ and we have $j_M \in \{2g, 2g+1\}$.\\

We summarize the result of this section in the following lemma: 
\begin{lem} Let $S$ be a hyperelliptic hyperbolic surface. Let $k_i \in \{1,2\}$ be the number of Weierstrass points used in \textbf{Step m} of our process and $j_m = \sum_{i=1}^{m-1}k_i$. Then there exist $g+1 \leq M \leq 2g+2$ non-separating loops $(\alpha_m)_{m=1,\ldots,M}$ on $S$, such that 
\[
\ell(\alpha_m)  \leq 4\log \left(\frac{ 4(g-1) }{ 2g+2-j_m} +2 \right), \text{ for all } m \in \{1 ,\ldots, M\}
\]      
Especially the $(\alpha_m)_m$ are bounded by $4 \log(4g)$. 
\label{thm:alpham}
\end{lem}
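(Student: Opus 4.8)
This statement is a summary of the disk-growing construction carried out above, so the plan is to assemble the pieces already in place and verify the two quantitative claims; there is essentially no new idea to introduce.

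First I would fix the output of the algorithm: for each $m\in\{1,\dots,M\}$ it produces an arc $\delta_m$ on the quotient sphere $\Sigma$ joining $k_m\in\{1,2\}$ Weierstrass points, obtained at the instant Step $m$ first fails to be embedded. At that instant the $2g+2-j_m$ disks that have not yet been consumed are pairwise disjoint and embedded in $\Sigma$, so the sum of their areas does not exceed $\area(\Sigma)=2\pi(g-1)$; this is precisely the estimate \eqref{eq:deltam}, giving $\ell(\delta_m)=2r_m\le 2\arccosh\!\left(\frac{2(g-1)}{2g+2-j_m}+1\right)$. The one point needing a word of care here is that the disks consumed in earlier steps sit at strictly smaller radii and may simply be discarded from this area count, so only the disks still growing in Step $m$ enter the inequality.

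Next I would pass from $\delta_m$ to the loop $\alpha_m\subset S$. By the two cases of the construction, the lift of $\delta_m$ under $\Pi$ is either a simple closed non-separating geodesic through two Weierstrass points, of length $2\ell(\delta_m)=4r_m$, or a figure-eight geodesic through one Weierstrass point, from which we select one loop $\alpha_m$ of length $<\ell(\delta_m)<4r_m$; in both cases $\alpha_m$ is non-separating and $\ell(\alpha_m)\le 4r_m$. Applying the elementary inequality $\arccosh(x)\le\log(2x)$ to the bound on $r_m$ then yields \eqref{eq:alpham}, which is exactly the asserted inequality.

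Finally it remains to pin down the range of $M$ and the uniform bound $4\log(4g)$. Since Step $M$ is the last step, $\sum_{m=1}^{M}k_m=2g+2$ with each $k_m\in\{1,2\}$, which forces $g+1\le M\le 2g+2$ and $j_M=2g+2-k_M\in\{2g,2g+1\}$. For $m\le M$ we have $j_m\le j_M\le 2g+1$, hence $2g+2-j_m\ge 1$ and $\frac{4(g-1)}{2g+2-j_m}+2\le 4(g-1)+2=4g-2<4g$, so $\ell(\alpha_m)\le 4\log(4g)$ for all $m$. I do not anticipate a genuine obstacle; the only mildly delicate ingredient is the bookkeeping that keeps $j_m$ consistent across steps together with the observation, already used above, that the area inequality in each step involves only the not-yet-consumed disks.
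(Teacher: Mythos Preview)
Your proposal is correct and matches the paper's approach: the lemma is explicitly presented there as a summary of the disk-growing construction, and your write-up faithfully reproduces the area comparison \eqref{eq:deltam}, the lift dichotomy yielding \eqref{eq:alpham}, and the counting for $M$ and the uniform bound. One tiny slip: in general $\ell(\delta_m)\le 2r_m$ rather than $=2r_m$, since in Case~ii) of Step~$m$ one of the two touching disks may have been frozen at an earlier, smaller radius; this does not affect the argument.
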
 

\textbf{Note:} It is possible that some of these loops in this sequence coincide. We will deal with this problem in detail in the next sections.   

\section{Homology of edges and loops} \label{sec:hom}

Unfortunately, the $(\alpha_m)_m$ obtained in the preceding section are not pairwise distinct in general and, furthermore, their union may be disconnecting $S$. We must therefore find a way to select as many homologically independent cycles among them as possible. For this we make some topological considerations.

\subsection{The graph in $\Sigma$} \label{sub:graph}

We begin with the configuration of the loops and edges on $\Sigma$ resulting from the procedure in Section {\ref{sec:loops}. They form a graph $\cG$ whose vertices are the $2g+2$ cone points of $\Sigma$. For the ease of exposition we use the following terminology for connected components of graphs, slightly deviating from the usual one. 
\begin{itemize}
\item[--] \emph{edge}: an edge in the classical sense that connects two distinct vertices.
\item[--] \emph{loop}: an edge in the classical sense that connects a vertex to itself.
\item[--] \emph{tree}: a tree in the classical sense that has at least one edge.
\item[--] \emph{looped tree} a tree with at least one edge to which exactly one loop is attached.
\end{itemize}

In this terminology, edges are trees but loops are not looped trees.

\begin{figure}[!ht]
\SetLabels
\L(.11*.05) edge\\
\L(.33*.05) loop\\
\L(.57*.05) tree\\
\L(.75*.05) looped tree\\
\endSetLabels
\AffixLabels{%
\centerline{%
\includegraphics[width=12cm]{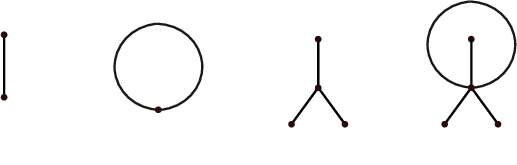}}}
\caption{Terminology for connected components.}
\label{fig:graphcomponents}
\end{figure}

\begin{lem} Each connected component $\cT$ of $\cG$ is either a loop or a tree or a looped tree.\label{lem:graph_nooroneloop}
\end{lem}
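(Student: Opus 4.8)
The plan is to argue that a connected component $\cT$ of $\cG$ cannot contain two independent cycles, by exploiting the way arcs are produced in the algorithm of Section~\ref{sec:loops}. The key structural fact is that each step of the process either joins two previously distinct components (in the edge case, or in Case~ii) where one disk meets another), or attaches a loop at a single vertex of the already-constructed graph (in Case~i)). Combined with the observation that a disk that has already self-intersected is not expanded any further, this will show that no component ever acquires more than one loop.

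First I would set up the bookkeeping. For $m = 1, \dots, M$ let $\cG_m$ denote the subgraph of $\cG$ consisting of the vertices together with the arcs $\delta_1, \dots, \delta_m$; so $\cG_0$ has no edges and $\cG_M = \cG$ (up to the fact that $\cG$'s vertex set is all $2g+2$ cone points, isolated vertices being irrelevant). I claim the following invariant holds for every $m$: each connected component of $\cG_m$ that contains at least one arc is either a loop, a tree, or a looped tree, and moreover a component is a loop or looped tree precisely when one of its vertices is the centre of a disk that has already self-intersected. The lemma is the case $m = M$.

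Next I would verify the invariant by induction on $m$, examining how $\delta_m$ is added. In Case~ii) of Step~$m$ the arc $\delta_m$ is an edge between two Weierstrass points $p_a, p_b$ whose disks first came to touch each other; the crucial point is that neither disk had self-intersected before (such a disk would have been removed from the expansion), so by the invariant the components of $p_a$ and $p_b$ in $\cG_{m-1}$ are both trees or single vertices, and they are distinct components (if they were the same component, that component would be a tree, hence the two growing disks centred in it would already be forced to touch — more carefully, one shows the expansion within a tree component keeps disks from two of its vertices disjoint only up to the moment the tree is formed, so a new \emph{edge} inside an existing tree cannot occur; alternatively one simply observes that $\delta_m$ is taken to be a \emph{new} arc and the geometry forbids creating a cycle of edges). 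Adding an edge joining two trees (or attaching a pendant edge to a tree, or joining two isolated vertices) yields again a tree. In Case~i) the arc $\delta_m$ is a loop attached at a single vertex $p_a$ whose disk self-intersects; by the invariant $p_a$'s component was a tree or isolated vertex with no prior self-intersection, and attaching one loop turns it into a looped tree (or a loop, if the component was a single vertex), while marking $p_a$ as self-intersected. In all cases the invariant is preserved, and since a vertex whose disk has self-intersected is never expanded again, no component can acquire a second loop.

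The main obstacle I anticipate is making rigorous the claim that the algorithm never produces an \textbf{edge} whose two endpoints already lie in the same component, i.e. never closes up a cycle out of edges — equivalently, that the only cycles created are the loops from Case~i). This needs a genuine geometric argument about the disk-expansion dynamics: within a connected tree component built from edges, the disks centred at its vertices are linked in a chain of tangencies, and one must check that once such a chain is formed the algorithm's worst-case area estimate and the tangency pattern prevent two further distinct disks in that component from being the \emph{next} pair to touch before some disk either self-intersects or meets a disk outside the component. If this turns out to require more care than a short remark allows, the cleaner route is to note that in Section~\ref{sec:loops} each $\delta_m$ is by construction an arc not previously present and that the figure-eight/simple-closed dichotomy there already encodes ``loop at one vertex'' versus ``edge between two vertices''; combined with the no-re-expansion rule this gives the three-way classification directly, and the impossibility of a second loop on a component follows since the loop's vertex is frozen thereafter.
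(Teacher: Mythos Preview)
Your inductive framework is the same as the paper's, but you miss the one observation that makes the argument clean and you end up wrestling with a gap you yourself flag. The paper simply notes that in Step~$m$ the new arc $\delta_m$ always has at least one endpoint $p$ that is an \emph{isolated vertex} of $\cG_{m-1}$: the disks being expanded in Step~$m$ are precisely those centred at vertices not yet consumed, so the expanding disk involved in the new tangency or self-intersection is centred at a vertex with no arc attached. With this in hand the induction is one line: if $\delta_m$ is a loop then $\{p,\delta_m\}$ is itself a new component (a loop); if $\delta_m$ is an edge then it attaches the isolated vertex $p$ as a pendant to whatever component $\cT$ the other endpoint lies in, turning a tree into a tree and a loop or looped tree into a looped tree. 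In particular an edge can never close a cycle, because one of its endpoints was isolated---this dissolves your ``main obstacle'' without any geometric disk-dynamics argument.

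Your route via tracking self-intersections also contains a separate logical slip. In Case~ii) you argue: neither $p_a$ nor $p_b$ has self-intersected, hence by your invariant their components are trees or isolated vertices. But your invariant says a component is looped precisely when \emph{some} vertex of it has self-intersected, not when $p_a$ or $p_b$ has; nothing you've said rules out $p_b$ sitting in a looped tree whose loop lives at a different vertex. (In fact the paper's observation shows this configuration does arise: a loop is always born at an isolated vertex, and a looped tree is then grown by attaching pendant edges to it, so a later edge can certainly land on a non-loop vertex of a looped tree.) Once you use the ``one endpoint is isolated'' fact, this issue also disappears, since you never need to know anything about the \emph{other} endpoint's component beyond its type.
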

\begin{proof} This is seen by induction on the procedure that yields $\cG$. During the induction process we have, in addition,  components that are isolated vertices. These disappear at the end.

At the beginning the components are the cone points. Now assume the lemma, plus isolated vertices, holds for the graph $\cG_{m-1}$ that is present at the beginning of Step $m$. Let $\delta$ be the new geodesic arc obtained at this step. By construction the initial point, $p$, of $\delta$ is a cone point to which no edge or loop of $\cG_{m-1}$ is attached. For the endpoint there are then two possibilities: 1.) it coincides with $p$ in which case $\delta$ together with $p$  is a component of $\cG_{m}$, or 2.) $\delta$ connects to a component $\cT$ of $\cG_{m-1}$ in which case $\cT' := \cT + \delta + p$ is a tree if $\cT$ is a tree, respectively a looped tree  if $\cT$ is a loop or a looped tree. 
The procedure stops at the moment where no more isolated vertices are left.
\end{proof}

\subsection{The lifts in $S$} \label{sub:lifz}
In the following we denote by $X^*$ the lift, i.e. inverse image in $S$ of any subset $X$ of $\Sigma$ under the natural projection $\Pi: S \to \Sigma$.

For each edge $e$ of $\cG$ the lift $e^*$ is a simple closed geodesic, and for each loop $\lambda$ the lift is a closed geodesic in the shape of a figure eight. In Section \ref{sec:prune} we shall remove certain parts from $\cG^*$ so as to turn the remainder into a non-separating set. As a preparation for this we look at the following situation.

Let $\cH$ be obtained from $\cG$ by removing from it a certain number of edges and loops, but keeping all the vertices. Then the connected components of $\cH$ are either isolated vertices, trees, loops or looped trees.  Lift $\cH$ to $\cH^*$ on $S$. We modify $\cH^*$ by removing from each figure eight curve $\lambda^*$ occurring in $\cH^*$ one of the two simple loops it consists of where, for our purpose, it does not matter which of the two is chosen. The remaining loop, $\lambda^\#$, then projects one-to-one onto the loop $\lambda$ under the natural projection $\Pi : S \to \Sigma$. We denote by $\cH^\#$ the lift of $\cH$ modified in this way. If $\cH$ consist of $n$ edges and loops then $\cH^\#$ consists of $n$ simple closed curves.

In the following surface topological considerations we argue with arcs and (simple closed) Jordan curves on the complement $\Sigma \smallsetminus \cH$; they have nothing to do with the graph paths on $\cH$.

\begin{lem}\label{separating} $\cH^\#$ in $S$ is non-separating if and only if any open connected component of $\Sigma \smallsetminus \cH$ contains a Jordan curve $\Gamma$ that separates $p_1, \dots, p_{2g+2}$ into two odd subsets i.e. the number of Weierstrass points on either side of $\Gamma$ is odd.
\label{lem:lift_nonsep}
\end{lem}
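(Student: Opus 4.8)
The plan is to translate the topological condition "$\cH^\#$ is non-separating in $S$" into a statement about $\Z/2\Z$-homology, and then to use the double cover structure $\Pi : S \to \Sigma$ branched over the Weierstrass points to convert this into the parity condition on curves in $\Sigma \smallsetminus \cH$. First I would recall that a closed one-dimensional subset $K$ of a surface is non-separating precisely when the complement $S \smallsetminus K$ is connected, equivalently when there is a simple closed curve in $S$ meeting $K$ transversally in an odd number of points (its mod-$2$ intersection number with some cycle is nonzero). So I want to produce, from a curve $\Gamma$ in a component of $\Sigma \smallsetminus \cH$ splitting the Weierstrass points into two odd sets, a closed curve in $S$ that crosses $\cH^\#$ oddly — and conversely.

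The key geometric observation is this: since $\Pi$ is a two-sheeted branched cover with branch points exactly $p_1,\dots,p_{2g+2}$, a simple closed curve $\Gamma \subset \Sigma$ avoiding the branch points lifts to $S$ as either (a) two disjoint closed curves, each mapping homeomorphically, if $\Gamma$ encircles an even number of Weierstrass points, or (b) a single closed curve double-covering $\Gamma$, if $\Gamma$ encircles an odd number. This is the standard monodromy computation: the monodromy of the cover around a small loop about $p_i$ is the nontrivial element of $\Z/2\Z$, so the monodromy of $\Gamma$ is the parity of the number of $p_i$ it encloses. Now suppose $\Gamma$ lies in an open component $V$ of $\Sigma \smallsetminus \cH$ and separates the Weierstrass points into two odd subsets. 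Then $\Gamma$ lifts to a single curve $\widetilde\Gamma$ in $S$. I claim $\widetilde\Gamma$ meets $\cH^\#$ oddly in $S$: indeed $\widetilde\Gamma$ is disjoint from $\cH^* = \Pi^{-1}(\cH)$ because $\Gamma \subset V$ is disjoint from $\cH$, hence $\widetilde\Gamma$ is disjoint from all of $\cH^*$, in particular from $\cH^\#$. So that direct approach gives intersection number zero — which is wrong — and this tells me the mechanism must be different: $\widetilde\Gamma$ does not detect non-separation of $\cH^\#$ by crossing it, but by living on "one side." The correct statement is that $\widetilde\Gamma$ is a loop in $S$ that is non-separating (or more precisely distinguishes the two would-be components), because it is the connected lift of a curve that, in $\Sigma$, bounds a region whose preimage is connected; and crucially the complement $S \smallsetminus \cH^\#$ has the property that $\widetilde\Gamma$ cannot be capped off on either side within it.

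So the actual argument I would run is a dimension/Euler-characteristic or first-Betti-number count. Let $\cH$ have $n$ edges and loops; then $\cH^\#$ consists of $n$ disjoint simple closed curves, and cutting $S$ along $\cH^\#$ yields a (possibly disconnected) surface $S'$ with $2n$ boundary circles and the same Euler characteristic $2-2g$. The set $\cH^\#$ is non-separating iff the number of connected components of $S'$ is strictly fewer than $n+1$; equivalently iff $b_0(S') \le n$; equivalently iff not all $n$ curves are "independently separating." The plan is to compute $b_0(S')$ in terms of data on $\Sigma$. Cutting $\Sigma$ along $\cH$ gives a surface-with-boundary $\Sigma'$ whose components correspond to the components $V$ of $\Sigma \smallsetminus \cH$; since $\Sigma$ is a sphere and $\cH$ is a graph, each $V$ is planar, and $\Pi$ restricts to a branched double cover $\widetilde V \to \overline V$ branched over the Weierstrass points inside $V$. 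A planar double cover branched over $t$ points is connected iff $t$ is odd, and in that case $\widetilde V$ is again planar with genus determined by Riemann–Hurwitz; if $t$ is even, $\widetilde V$ splits into two copies of $\overline V$. The number of components of $S'$ is therefore $\sum_V (\text{components of } \widetilde V) = \#\{V : t_V \text{ odd}\} + 2\,\#\{V : t_V \text{ even}\}$, minus an adjustment because cutting $S$ along $\cH^\#$ is not quite the same as cutting along $\cH^* = \Pi^{-1}(\cH)$ — for loops we only removed one of the two lifted sub-loops, so I must carefully account for how the figure-eight lifts sit over loops and reattach the discarded half-loops. Working this bookkeeping out and comparing $b_0(S')$ with $n+1$ will show: $\cH^\#$ separates $\iff$ every component $V$ of $\Sigma \smallsetminus \cH$ has an even number of Weierstrass points $\iff$ no $V$ contains a curve $\Gamma$ splitting the $2g+2$ points into two odd subsets (since within a fixed planar region $V$, a curve $\Gamma \subset V$ separates $p_1,\dots,p_{2g+2}$ into two odd sets iff $V$ itself contains an odd number of them, by taking $\Gamma$ close to $\partial V$ or to a single enclosed point).

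The main obstacle, I expect, is precisely the last bookkeeping step: relating the cut-open surface $S \smallsetminus \cH^\#$ (where only half of each figure-eight has been removed) to the honest branched-cover picture $S \smallsetminus \Pi^{-1}(\cH)$, and checking that reattaching the discarded half-loops $\lambda^\#$'s worth of arcs changes the component count and the genus exactly as needed so that the parity criterion comes out clean. One must also verify the elementary but essential topological fact about planar regions — that a simple closed curve in a planar surface $V$ separates a finite marked set into two subsets of prescribed parities iff those parities are realized by the distribution of marked points relative to the curve, and that an odd total forces an odd split to exist while an even total forbids it — and make sure "any component contains such a $\Gamma$" is equivalent to "some component contains such a $\Gamma$," which holds because the total number $2g+2$ is even, so odd-ness of one region's count is equivalent to odd-ness of at least one region's count being impossible to have zero such regions unless all counts are even.
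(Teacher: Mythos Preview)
Your strategy---count connected components of $S \smallsetminus \cH^{\#}$ via the branched double cover and compare with the paper's path-lifting argument---is a reasonable alternative in spirit, but the execution contains two concrete errors and leaves the central difficulty unresolved.

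First, the claim ``a planar double cover branched over $t$ points is connected iff $t$ is odd'' is false: a double cover of a disk branched over any $t \geq 1$ points is always connected (the monodromy around a single branch point is already nontrivial). Parity of $t$ governs whether the boundary circle lifts to one or two circles, not connectedness of the total space. More importantly, in the situation at hand there are \emph{no} Weierstrass points in any open component $V$ of $\Sigma \smallsetminus \cH$: by construction all $p_i$ are vertices of $\cH$, so $t_V = 0$ for every $V$ and the restricted covers $\Pi^{-1}(V) \to V$ are unramified. The correct statement---which is essentially the paper's equation on odd winding number---is that $\Pi^{-1}(V)$ is connected iff some loop in $V$ has nontrivial $\Z/2\Z$-monodromy, i.e.\ iff $V$ contains a simple closed curve enclosing an odd number of the $p_i$. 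So your intuition lands on the right criterion, but via an incorrect mechanism.

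Second, your final reduction (``any component contains such a $\Gamma$'' is equivalent to ``some component does'') is wrong and would give the wrong statement. Non-separation of $\cH^{\#}$ genuinely requires the odd-curve condition in \emph{every} component: a single component $V$ with trivial monodromy already forces $\Pi^{-1}(V)$ to split into two sheets, and no amount of reattaching the discarded half-loops $\lambda^{**}$ can reconnect those two sheets through $V$.

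Finally, the step you flag as ``the main obstacle''---passing from $S \smallsetminus \cH^{*}$ to $S \smallsetminus \cH^{\#}$ by reattaching one loop of each figure eight---is indeed the heart of the matter, and you do not carry it out. The paper's proof avoids the global Euler-characteristic bookkeeping entirely: it works directly with paths, using the winding-number model of the cover to show that for every loop $\lambda$ of $\cH$ one can build an arc from one side of $\lambda$ to the other whose lift in $S$ misses $\cH^{\#}$, by pre- and post-composing a short crossing arc with odd-monodromy curves $\Gamma_1, \Gamma_2$ in the two adjacent regions to switch sheets and thereby cross $\lambda^{**}$ rather than $\lambda^{\#}$. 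That explicit construction is what makes the ``only if'' direction go through and is missing from your plan.
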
 

\begin{proof}
We use the following topological models for $\Sigma$ and $S$. For $\Sigma$ we take the compactified complex plane $\overline{\C} = \C \cup \{\infty\}$ and mark it with $2g+2$ distinct points $p_1 = \infty, p_2, \dots, p_{2g+2}$. These play the role of the Weierstrass points. 

To define the twofold branched covering branching over $p_1, \dots, p_{2g+2}$ we use winding numbers. For this we select a base point $p_*$ in  $\C \smallsetminus \{p_2,p_3,\ldots,p_{2g+2} \}$. For any piecewise smooth closed curve $\eta$ in $\C \smallsetminus \{p_2,p_3,\ldots,p_{2g+2} \}$ with initial and endpoint $p_*$ and any $p_k$, $k=2, \dots, 2g+2$, the \emph{winding number}, e.g. \cite{ah} is defined as
\[
W(\eta,p_k) = \frac{1}{2 \pi i}\int_{\eta}\frac{d z}{z - p_k}.
\]
Informally, $W(\eta, p_k)$ is the number of times $\eta$ circles around $p_k$. The \emph{total winding number} of $\eta$ is defined as the sum
\[
W(\eta) = \sum_{k=2}^{2g+2}W(\eta,p_k).
\]
If  now $p \in \C \smallsetminus \{p_2, \dots, p_{2g+2}\}$ is any point and $\delta, \gamma$ are piecewise smooth arcs from $p_*$ to $p$ we shall say that $\delta$ and $\gamma$ are \emph{equivalent} if the total winding number of $\delta \gamma^{-1}$, i.e. $\delta$ followed by $\gamma$ in the reversed sense, is even. We denote the corresponding equivalence class of $\delta$ by $[\delta]$:
\[
 [\delta] = [\gamma]    \Leftrightarrow W(\delta \gamma^{-1}) \text{ \ is even.}  
\]
There are exactly two equivalence classes. \textit{Figure \ref{fig:eq_classes_curv1}} illustrates this for $g=2$ with $[\gamma'] = [\delta]$ and $[\gamma''] \neq [\delta]$.

\begin{figure}[!ht]
\centering
\SetLabels
\L(.22*.76) $\,p_*$\\
\L(.27*.52) $\gamma'$\\
\L(.30*.15) $p$\\
\L(.75*.60) $\gamma''$\\
\L(.40*.69) $\delta$\\
\L(.34*.59) $p_2$\\
\L(.38*.39) $\,\,p_3$\\
\L(.50*.44) $\,\,p_4$\\
\L(.63*.59) $p_5$\\
\L(.68*.39) $p_6$\\
\endSetLabels
\AffixLabels{%
\centerline{%
\includegraphics[width=9cm]{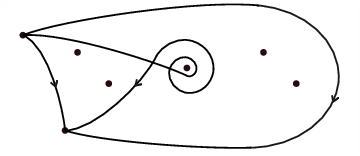}}}
\caption{Equivalence classes of curves on $\C \smallsetminus \{p_2,p_3,\ldots,p_{6} \}$; the pairs $(p,[\gamma'])$ and $(p,[\gamma''])$ are the two covering points of $p$. }
\label{fig:eq_classes_curv1}
\end{figure} 
We now let $\mathcal{S}'$ be the set of all pairs $(p,[\delta])$, where $[\delta]$ is the equivalence class of a curve from $p_*$ to $p$ in $\C \smallsetminus \{p_2, \dots, p_{2g+2}\}$. There is a unique topology on $\mathcal{S}'$ such that the mapping \\ $(p,[\delta]) \to \Pi(p,[\delta]) = p$ is a local homeomorphism from $\mathcal{S}'$ to 
$\C \smallsetminus \{p_2,p_3,\ldots,p_{2g+2} \}$. Furthermore, there is a natural $2g+2$ point compactification $\mathcal{S}$ of $\mathcal{S}'$ such that $\Pi$ extends to a branched covering $\Pi : \mathcal{S} \to \overline{\C}$. We may now identify $\mathcal{S}$ with $S$ and $\overline{\C}$ with $\Sigma$.

Finally, for any closed curve $c : [0,1] \to \Sigma \smallsetminus \{p_1, p_2, \dots, p_{2g+2}\}$ with $c(0) = c(1)$ an arbitrary point in $\Sigma \smallsetminus \{p_1, p_2, \dots, p_{2g+2}\}$ we define the total winding number as
\[
W(c) = W(\gamma c \gamma^{-1}),
\]
where $\gamma$ is an arc from $p_*$ to $c(0)$ in $\Sigma \smallsetminus \{p_1, p_2, \dots, p_{2g+2}\}$. If $c$ is primitive i.e. the parametrized curve runs once along its trace and if $\widehat{c} : [0,1] \to S$ is a  lift  of $c$ satisfying $\Pi \circ \widehat{c} = c$  then
\begin{equation}
\textit{$\widehat{c}$ connects distinct lifts of $c(0)$ if and only if $W(c)$ is odd.}
\label{eq:odd_winding}  
\end{equation}



We first translate this into a statement for open components. Let $\Omega$ be an open connected component of $\Sigma \smallsetminus \cH$. Its lift $\Omega^*$ in $S$ is either connected or consists of two open connected components. If it is connected then there exists a simple arc $\Gamma^* : [0,1] \to \Omega^*$ that connects distinct lifts of the same point $\Pi(\Gamma^*(0)) =  \Pi(\Gamma^*(1))$ in $\Omega$. Restricting $\Gamma^*$ to a sub arc, if necessary, we may assume that, furthermore, its projection $\Gamma = \Pi \circ \Gamma^* : [0,1] \to \Sigma$ is a simple closed curve. By \eqref{eq:odd_winding} $\Gamma$ has an odd winding number. Conversely, suppose that $\Omega$ contains a Jordan curve $\Gamma : [0,1] \to \Omega$ with an odd winding number. Then by \eqref{eq:odd_winding} again there exists a lift $\Gamma^*$ of $\Gamma$ in $S$ connecting distinct lifts of $\Gamma(0)$ in $S$, from which we conclude that $\Omega^*$ must be connected. Altogether we have the following consequence of \eqref{eq:odd_winding} for any connected component $\Omega$ of $\Sigma \smallsetminus \cH$ and its lift $\Omega^*$ in $S$,
\begin{equation}
\textit{$\Omega^*$ is connected if and only if $\Omega$ contains a Jordan curve with an odd winding number.}
\label{eq:odd_winding2}  
\end{equation}

We now turn to the proof of Lemma \ref{lem:lift_nonsep}
Assume first for the `if' direction of the lemma that each component $\Omega$ contains  a Jordan curve with odd winding number. We must show that for any pair of points $p^*,q^* \in S \smallsetminus \cH^\#$ there exists a connecting arc that does not intersect $\cH^\#$.

First of all, there exists an arc $A : [0,1] \to \Sigma$ from $A(0) =p:=\Pi(p^*)$ to $A(1) = q:=\Pi(q^*)$ that avoids the Weierstrass points and furthermore intersects $\cH$ only on its loops. This is so because by Lemma \ref{lem:graph_nooroneloop} the connected components of $\cH$ are just edges, trees, loops or looped trees. This curve we lift to a curve $A^* : [0,1] \to S$ satisfying $\Pi \circ A^*=A $. There are two such lifts. We take the one whose initial point is $A^*(0) = p^*$. Its endpoint $A^*(1)$ is then either $q^*$ or the second lift $q'$ of $q$. In the latter case we modify $A$ on $\Sigma$ by adding at its endpoint $q$ a Jordan curve $\Gamma_q$ with an odd winding number in the open component $\Omega_q$ of $\Sigma \smallsetminus \cH$ that carries $q$. With $A$ modified in this way its lift now satsfies $A^*(0)=p^*$ and $A^*(1)=q^*$.

Now $A^*$ possibly intersects $\cH^\#$, but this can happen only on its loops. We thus further modify $A^*$ as follows. Assume $\alpha^*$ is a small arc on $A^*$ with the ``unfortunate'' property that it intersects one of the lifted loops $\lambda^\#$ that has been kept for the definition of $\cH^\#$. Then $\alpha := \Pi \circ \alpha^*$ intersects the corresponding loop $\lambda$ of $\cH$ on $\Sigma$ going from a nearby point $x$ on one side of $\lambda$ to a nearby point $y$ on the other. In the open connected components $\Omega_x$, $\Omega_y$ of $\Sigma \smallsetminus \cH$ adjacent to $\lambda$ that carry $x$ and $y$ respectively we take Jordan curves with odd winding numbers $\Gamma_x$ from $x$ back to $x$ and $\Gamma_y$ from $y$ back to $y$ and replace $\alpha$ by the combined arc $\tilde{\alpha} := \Gamma_x \alpha \Gamma_y$. Its lift $\tilde{\alpha}^*$ (the one that has the same endpoints as $\alpha^*$) then crosses the loop $\lambda^{\#\#}$ of the figure eight lift of $\lambda$ that has not been kept for the definition of $\cH^\#$ and so $\tilde{\alpha}^*$ is disjoint from $\cH^\#$. By replacing $\alpha^*$ with $\tilde{\alpha}^*$ in $A^*$ we have decreased the number of intersections with $\cH^\#$. Repeating the procedure we eventually get an arc $A^*$ from $p^*$ to $q^*$ that avoids $\cH^\#$ altogether. This finishes the proof in the `if' direction.

\begin{figure}[!ht]
\SetLabels
\L(.30*.80) $\Omega_1^*$\\
\L(.30*.20) $\Omega_2^*$\\
\L(.67*.72) $\lambda^{\#}$\\
\L(.67*.22) $\lambda^{\#\#}$\\
\L(.60*.80) $\widehat{B}$\\
\endSetLabels
\AffixLabels{%
\centerline{%
\includegraphics[height=3cm,width=8.8cm]{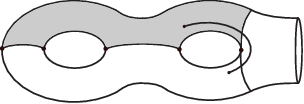}}}
\caption{A possible configuration of the two lifts $\Omega_1^*$ (in gray) and $\Omega_2^*$ in $S$.}
\label{fig:Lemma32_part2}
\end{figure}

For the  'only if' assume that $\Omega$ is a connected component of $\Sigma \smallsetminus \cH$ in which all Jordan curves have even winding numbers. By \eqref{eq:odd_winding2} its lift $\Omega^*$ in $S$ consists of two connected components $\Omega_1^*$, $\Omega_2^*$ (see \textit{Figure \ref{fig:Lemma32_part2}}). We claim that any arc $ \widehat{B}$ in $S$ with initial point in $\Omega_1^*$ and endpoint in $\Omega_2^*$ must intersect $\cH^\#$. For this we may assume that the projection $B : = \Pi \circ  \widehat{B}$ avoids the Weierstrass points and intersects $\cH$ only on its loops for otherwise we are done. By restricting $\widehat{B}$ to a sub arc, if necessary, we may further assume that $\widehat{B}$ leaves $\Omega_1^*$ only once and enters $\Omega_2^*$ only once. Let now $\lambda$ be the loop of $\cH$ on the boundary of $\Omega$ at which $B$ leaves the component $\Omega$ and let $\Lambda$ be the component on the other side of $\lambda$ into which $B$ enters at that moment. Since $\Sigma$ is a sphere (this is crucial here) the following hold,
\begin{flalign}\label{eq:sphereprop}
\begin{alignedat}{2}
&\rule{5pt}{0pt}& &\text{-- $\Lambda$ and $\Omega$ are distinct components,} \\[3pt]
&\rule{5pt}{0pt}& &\text{-- arc $B$ leaves and re-enters $\Omega$ at the same boundary loop $\lambda$.} 
\end{alignedat}&&
\end{flalign}
Let $\lambda^\#$, $\lambda^{\#\#}$ be the two loops of the figure eight lift $\lambda^*$ of $\lambda$. The first statement in \eqref{eq:sphereprop} implies that one of them, say $\lambda^\#$, is on the boundary of $\Omega_1^*$ and the other is on the boundary of $\Omega_2^*$. The second statement now implies that $\widehat{B}$ leaves $\Omega_1^*$ intersecting $\lambda^\#$ and enters $\Omega_2^*$ intersecting $\lambda^{\#\#}$. Thus, $\widehat{B}$ intersects $\cH^\#$. 

\end{proof}

The following lemma from surface topology shows that if the curves of $\cH$ on $\Sigma$ satisfy the condition from Lemma \ref{lem:lift_nonsep} then their lifts in $\cH^{\#}$  are  part of a basis of  $H_1(S,\Z)$.  In the following we call a \emph{partial basis} a set of homology classes that can be completed into a homology basis.

\begin{lem}
Let $S$ be a compact orientable surface of genus $g \geq 1$ and let $a_1, \dots, a_{n}$   be distinct curves on $S$ with the following properties
\begin{enumerate}
\item[1)] $a_1, \dots, a_{n}$ are simple closed curves,
\item[2)] $S \smallsetminus (a_1 \cup \dots \cup a_{n})$ is connected.
\end{enumerate}
Then $n \leq 2g$ and the homology classes $[a_1], \dots, [a_{n}]$ form a partial basis of $H_1(S,\Z)$.
\label{lem:rank_n}
\end{lem}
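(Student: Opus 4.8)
The plan is to prove both conclusions of Lemma~\ref{lem:rank_n} simultaneously by a standard cut-and-count argument on the surface. First I would recall the basic fact that cutting a compact orientable surface of genus $g$ along a simple closed curve either disconnects it (if the curve is separating) or lowers the genus by one (if it is non-separating), and that in the latter case the number of boundary components increases by two. Iterating this for the family $a_1,\dots,a_n$: since by hypothesis $S\smallsetminus(a_1\cup\dots\cup a_n)$ is connected, each successive cut must be non-separating \emph{in the surface obtained so far}, hence each cut drops the genus by one. Starting from genus $g$ we can therefore cut at most $g$ times before reaching genus $0$; but to stay connected we never cut a separating curve, and on a genus-$0$ surface every simple closed curve is separating. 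This yields $n\le g$... wait, one must be careful here: cutting along $n$ curves on $S$ produces a surface with a certain Euler characteristic, and the right bound is $n\le 2g$, not $n\le g$, because the curves need not be disjoint.

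The cleanest route, therefore, is via Euler characteristic and the cut surface rather than an iterative genus count. Let $S'$ be the surface obtained by cutting $S$ along $a_1\cup\dots\cup a_n$ (equivalently, the metric completion of $S\smallsetminus(a_1\cup\dots\cup a_n)$); since $S\smallsetminus(a_1\cup\dots\cup a_n)$ is connected, so is $S'$. Here I would use the homological criterion directly: consider the map $H_1(S,\Z)\to\Z^n$... actually the slickest argument is the following. The complement being connected means that no nontrivial $\Z$-linear combination $\sum c_i[a_i]$ bounds in a way forcing a relation; more precisely, I would invoke the fact that a collection of disjoint simple closed curves whose complement is connected is part of a basis, and reduce the general (non-disjoint) case to the disjoint one. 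To do this I would first argue that we may assume the $a_i$ are pairwise disjoint: if two of them intersect, the complement being connected is preserved under small isotopies and the surgery operations needed to disjoin them do not change the homology classes up to the span, so after finitely many such moves we reach a disjoint family $a_1',\dots,a_n'$ with the same span of homology classes and still-connected complement. For a disjoint family, $S$ cut along them is a connected surface with $2n$ boundary circles and some genus $h\ge 0$, whose Euler characteristic is $2-2g = \chi(S') = \chi(S\text{ cut}) = 2-2h-2n$, giving $g = h+n \ge n$, hence $n\le g \le 2g$; and the classes $[a_i]$ are linearly independent because capping off the boundary circles of $S'$ in pairs and tracking the symplectic form shows each $a_i$ has a dual curve meeting only it, so they extend to a symplectic basis, i.e.\ form a partial basis.

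The step I expect to be the main obstacle is making rigorous the reduction to the disjoint case while controlling the homology classes — intersecting simple closed curves cannot always be made disjoint by isotopy alone, and the standard surgery (resolving an intersection point) replaces a curve by one or two curves whose classes are sums of the old ones, so one must check that connectedness of the complement is preserved and that the \emph{span} of the classes, together with the property of being a partial basis, is not lost. An alternative that sidesteps this is to argue purely homologically: the intersection pairing on $H_1(S,\Z)$ is a unimodular symplectic form, and one shows that if $[a_1],\dots,[a_n]$ were linearly dependent or failed to be a partial basis then some integral combination would be divisible or isotropic-with-trivial-complement, contradicting connectedness of $S\smallsetminus(a_1\cup\dots\cup a_n)$ via the fact that a separating cycle's Poincar\'e dual intersects it trivially. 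In the writeup I would most likely present the Euler-characteristic count for the disjoint case as the core, state the reduction to disjointness as a lemma citing a standard reference on surface topology (e.g.\ the classification of surfaces and the change-of-coordinates principle), and then conclude $n\le 2g$ and the partial-basis property in one paragraph.
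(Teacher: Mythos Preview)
Your proposal has a genuine gap at the reduction to disjoint curves. This reduction cannot work: on a genus-$g$ surface, $n$ pairwise \emph{disjoint} simple closed curves with connected complement satisfy $n \leq g$ (your Euler-characteristic count is correct there), whereas the lemma allows $n$ up to $2g$. A standard symplectic system $a_1,b_1,\dots,a_g,b_g$ already gives $n=2g$ intersecting curves with connected complement; no isotopy or surgery can turn these into $2g$ disjoint curves with connected complement, since that would contradict $n\leq g$. Concretely, on the torus the standard $a,b$ meet once, and smoothing that intersection produces a \emph{single} curve in class $[a]\pm[b]$, so both the number of curves and the rank of their span drop. Hence your ``same span, same $n$'' reduction fails on both counts. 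Your alternative via the intersection form is not a proof as stated either: connectedness of the complement is a statement about the union of the curves as a point set, not directly about the pairing of their classes, and you have not explained how a hypothetical $\Z$-relation among the $[a_i]$ would force a disconnection.

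The paper avoids this by never disjoining the curves. It puts the $a_i$ in general position so that $G=a_1\cup\dots\cup a_n$ is a graph in which the $a_i$ are cycles sharing vertices but \emph{no edges}. After enlarging the system to a maximal one, cutting $S$ along the $a_i$ yields a holed sphere each of whose boundary components pastes only to itself; a tubular-neighbourhood decomposition then exhibits $G$ (piecewise) as a deformation retract of $S$, identifying $H_1(S,\Z)$ with $H_1(G,\Z)$. The finish is pure graph theory: delete one edge from each $a_k$ (possible because the cycles are edge-disjoint), the graph stays connected, and completing to a spanning tree shows that $[a_1],\dots,[a_n]$ is part of a cycle basis of $H_1(G,\Z)$, hence of $H_1(S,\Z)$; the rank count gives $n\leq 2g$. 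This edge-disjoint-cycles observation is exactly what replaces your attempted geometric disjunction and is the idea your argument is missing.
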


For convenience we sketch a proof. All arguments are standard.

\begin{proof} 

Without loss of generality we may assume that the system of curves is \emph{maximal}, i.e that it cannot be extended to a system $a_1, \dots, a_{n}, a_{n+1}$ satisfying 1) and 2). Then, cutting $S$ open along $a_1, \dots, a_{n}$ we obtain a topological sphere $S'$ with $m$ holes for some $m \geq 1$. In turn, $S$ is the quotient 

\begin{equation*}
S = S'/(\text{mod pasting})
\end{equation*}
``pasting'' meaning the reverse of the cutting. Note that any pair of points $p_1, p_2$ on the boundary $\partial S'$ of $S'$ that come together in this pasting must belong to the \emph{same} connected component of $\partial S'$ for otherwise we could draw a simple curve $a_{n + 1}$ from $p_1$ to $p_2$ on $S'$ and $a_1, \dots, a_{n}, a_{n+1}$ on $S$ would still satisfy 1) and 2) contradicting the maximality.

The argument is via graphs. For this the curves are homotoped so that they pairwise intersect at most finitely many times and $S$ is viewed as polyhedral surface on which $a_1, \dots, a_{n}$ \emph{are edge paths that pairwise intersect only in vertices}. Note that this may be carried out such that conditions 1) and 2) are still satisfied.

For any connected component $c$ of $\partial S'$  we let $F_c$ be a small tubular neighbourhood of $c$ in $S'$, ``small'' meaning that all $F_c$ are annuli and pairwise disjoint. For each $c$ we set 

\begin{equation*}
S_c = F_c/(\text{mod pasting}).
\end{equation*}
This represents $S$ as a direct sum of an $m$-holed sphere $S^0$ with $m$ surfaces $S_c, S_{c'}, \dots$ attached along the holes. Accordingly, the first homology group of $S$ is the direct sum of the first homology groups of $S_c, S_{c'}, \dots$. It suffices therefore to prove the lemma for each $S_c$ individually, and for this we may assume without loss of generality that $m=1$.   Let

\begin{equation*}
G = c/(\text{mod pasting})
\end{equation*}
be the connected graph obtained by restricting the pasting to $c$. As a point set  $G$ is the same as $a_1 \cup \dots \cup a_{n}$. Furthermore, $a_1, \dots, a_{n}$ are naturally identified as cycles of $G$. 

Since $G$ is a deformation retract of $S_c$ there is a natural isomorphism $\mathbf{j} : H_1(S_c,\Z) \to H_1(G,\Z)$ that acts as the identity on the homology classes of $a_1 , \dots,  a_{n}$. 

It remains to construct a basis of $H_1(G,\Z)$ via a spanning tree e.g. \cite[section 2.1.5]{st}. For this we delete from each cycle $a_k$  some edge $e_k$, $k=1,\dots,n$.  As the $a_k$ have no edges in common the $e_k$ are pairwise distinct and the resulting graph $G'$ is connected. If $G'$ still has cycles we delete successively  further edges $f_1, \dots f_q$ belonging to cycles $b_1, \dots, b_q$ until the resulting graph $G''$ is a tree. Then the homology classes of $a_1, \dots, a_{n}, b_1, \dots, b_q$   form a basis of $H_1(G,\Z)$. Via the isomorphism $\mathbf{j}$ it is identified with a basis of $H_1(S_c,\Z)$ and $n +q = 2g$. 
\end{proof}

Combining Lemmas \ref{lem:lift_nonsep} and \ref{lem:rank_n} we get

\begin{cor}
Assume that $\cH$ has altogether $n$ loops and edges and let $\alpha_1, \dots, \alpha_n$ be the corresponding simple closed curves of which $\cH^{\#}$ consists of. Then the homology classes $[\alpha_1], \dots, [\alpha_n]$ form a partial basis if and only if any open connected component of $\Sigma \smallsetminus \cH$ contains a simple closed curve $\Gamma$ that separates $p_1, \dots, p_{2g+2}$ into two odd subsets.
\label{cor:odd_curve}
\end{cor}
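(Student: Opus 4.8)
The plan is to obtain the corollary directly by combining Lemmas~\ref{lem:lift_nonsep} and~\ref{lem:rank_n}, treating the two implications separately.

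For the ``if'' implication, assume that every open connected component of $\Sigma \smallsetminus \cH$ contains a simple closed curve separating $p_1,\dots,p_{2g+2}$ into two odd subsets. By Lemma~\ref{lem:lift_nonsep} this says precisely that $\cH^{\#}$ is non-separating, i.e.\ that $S \smallsetminus (\alpha_1 \cup \dots \cup \alpha_n)$ is connected. Since $\cH^{\#}$ consists by construction of the $n$ distinct \emph{simple closed} curves $\alpha_1,\dots,\alpha_n$, hypotheses 1) and 2) of Lemma~\ref{lem:rank_n} hold with $a_i=\alpha_i$, and that lemma gives at once that $n\le 2g$ and that $[\alpha_1],\dots,[\alpha_n]$ is a partial basis of $H_1(S,\Z)$.

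For the ``only if'' implication I would argue by contraposition. Suppose some open connected component of $\Sigma \smallsetminus \cH$ contains no such curve; by Lemma~\ref{lem:lift_nonsep} the complement $W := S \smallsetminus (\alpha_1 \cup \dots \cup \alpha_n)$ is then disconnected, and it remains to show that $[\alpha_1],\dots,[\alpha_n]$ cannot be a partial basis. This amounts to a converse of Lemma~\ref{lem:rank_n}: a disconnected complement forces a homological relation. Concretely, let $G := \alpha_1 \cup \dots \cup \alpha_n \subset S$ and pick a connected component $W_1 \subsetneq S$ of $W$. Its frontier $\partial\overline{W_1}$ is a nonempty mod-$2$ $1$-cycle carried by $G$, with $[\partial\overline{W_1}] = 0$ in $H_1(S,\Z/2)$ because $\partial\overline{W_1}$ separates $S$. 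The point that makes this work cleanly is that, since each connected component of $\cG$ (hence of $\cH$) is a tree, a loop, or a looped tree (Lemma~\ref{lem:graph_nooroneloop}), the lift $\cH^{\#}$ is a ``tree of circles'' with one extra circle per loop, so that $b_1(\cH^{\#}) = n$; the edge-disjoint cycles $\alpha_1,\dots,\alpha_n$ then form a basis of $H_1(\cH^{\#},\Z/2)$. Hence $[\partial\overline{W_1}] = \sum_{i\in I}[\alpha_i]$ in $H_1(\cH^{\#},\Z/2)$ for a unique nonempty $I$, and mapping into $S$ yields $\sum_{i\in I}[\alpha_i] = 0$ in $H_1(S,\Z/2)$. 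Since a partial basis of $H_1(S,\Z)$ remains linearly independent modulo $2$ (it spans a direct summand $\cong \Z^n$), this relation shows $[\alpha_1],\dots,[\alpha_n]$ is not a partial basis, completing the contraposition.

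The two uses of the quoted lemmas are routine; the step that needs real care is the converse of Lemma~\ref{lem:rank_n}, and inside it the identity $b_1(\cH^{\#}) = n$ — this is exactly where the absence of cycles in $\cG$ other than single loops is used, and it is checked by an Euler-characteristic count on the branched double cover (the lift of an edge is a circle meeting the lift of any other arc in at most one Weierstrass point). Alternatively one can avoid computing $b_1$ by re-running the spanning-tree argument of Lemma~\ref{lem:rank_n} after cutting $S$ open along $G$: the disconnectedness of the cut-open surface produces, via a spanning tree of the identification graph, a basis of $H_1(S,\Z)$ containing the $[\alpha_i]$ together with at least one further boundary cycle, which forces the relation.
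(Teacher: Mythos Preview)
Your argument is correct and follows the paper's approach: the paper's entire proof is the single line ``Combining Lemmas~\ref{lem:lift_nonsep} and~\ref{lem:rank_n} we get'', and your ``if'' direction is exactly this combination. For the ``only if'' direction the paper leaves the converse of Lemma~\ref{lem:rank_n} implicit, whereas you supply it explicitly via the computation $b_1(\cH^{\#})=n$ (correct, by the Euler-characteristic count you indicate, using that each component of $\cH$ is a tree, a loop, or a looped tree) together with the mod~$2$ relation coming from the boundary of a component of the disconnected complement; this is a sound and natural way to fill in what the paper omits.
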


\section{A pruning algorithm} \label{sec:prune}
In this section we ``prune'' $\cG$ that is, we shall delete certain edges and loops so as to get homologically independent lifts in $S$. We recall that in this process the vertices shall be kept.

\begin{exa}
\emph{\textit{Figure \ref{fig:example_23}} shows on the first row a hypothetical graph $\cG$ consisting of $n$ blocks, $n$ an even number, where each block is a pair formed by a tree with one edge and a loop that surrounds it. The number of vertices is $2g+2 = 3n$.}
\label{exa:removeloops}
\end{exa}

\begin{figure}[!ht]
\vspace{-0.5cm}
\SetLabels
\L(.26*.42) $1$\\
\L(.43*.42) $2$\\
\L(.73*.42) $n$\\
\L(.86*.70) $\cG$\\
\L(.57*.70) $\ldots$\\
\L(.86*.20) $\cH$\\
\L(.57*.20) $\ldots$\\
\endSetLabels
\AffixLabels{%
\centerline{%
\includegraphics[width=10cm]{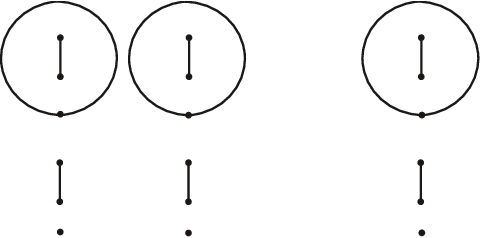}}}
\caption{Graph $\cG$ with $n$ loops. Deleting all loops yields subgraph $\cH$.}
\label{fig:example_23}
\end{figure}
For the hypothesis of Corollary \ref{cor:odd_curve} to hold we must delete for each block either the loop or the edge. Hence, the best  pruning for this example yields rank $n = \frac{1}{3}(2g+2)$.\\

We now show that a pruning with resulting rank of this order is always possible, where $\cG$ is again the graph as in Section \ref{sec:hom}.
\begin{lem} There exists a subgraph $\cH$ of $\cG$ for which $\cH^{\#}$ in $S$ consists of at least $\lceil \frac{2g+2}{3} \rceil$ closed curves that form a partial homology basis.
\label{lem:2g3pruning}
\end{lem}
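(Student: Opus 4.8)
The plan is to analyze the connected components of $\cG$ using Lemma~\ref{lem:graph_nooroneloop} and decide, component by component, which edges and loops to delete so that the hypothesis of Corollary~\ref{cor:odd_curve} is met while the total number of retained edges and loops is at least $\lceil \tfrac{2g+2}{3}\rceil$. Recall that after deleting edges and loops (but keeping all vertices), every open connected component $\Omega$ of $\Sigma \smallsetminus \cH$ must contain a simple closed curve separating the $2g+2$ Weierstrass points into two odd subsets; equivalently, $\Omega$ must contain an \emph{even} number of Weierstrass points in its interior together with an odd/odd split being achievable, which since a simple closed curve in a disk-with-punctures region can enclose any subset of the punctures, amounts to requiring that $\Omega$ contains at least two Weierstrass points and that the parity obstruction $2g+2$ being even is compatible (so one just needs each component to contain $\ge 2$ punctures with the complement also carrying $\ge 2$, which here is automatic since the total is even and $\ge 6$). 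So the real constraint is: after pruning, no connected region of $\Sigma \smallsetminus \cH$ may isolate a single Weierstrass point or an odd set that cannot be split — in practice, we need each face region to see an even split, which I will arrange by keeping $\cH$ "sparse enough" that every complementary region contains many cone points.

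First I would reduce to a bookkeeping problem on the graph $\cG$. Let $\cG$ have vertex set of size $2g+2$, and let its connected components (in the paper's terminology) be the isolated vertices, trees, loops, and looped trees. Write $V = 2g+2$. A tree component on $v$ vertices has $v-1$ edges; a loop component has $1$ vertex and $1$ loop; a looped tree on $v$ vertices has $v-1$ edges plus $1$ loop, i.e.\ $v$ edges-and-loops total. The idea is that within each component we will retain a spanning structure that keeps the complement "connected with even splits" — concretely, for a tree component we may keep all $v-1$ of its edges (a tree on the sphere does not disconnect the complement of the remaining punctures, and the single complementary region around it contains all other $V - v$ cone points plus the $v$ of the tree, allowing an odd/odd split as long as $v$ and $V-v$ aren't both forced odd). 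The delicate case is loops and looped trees: a loop $\lambda$ cuts $\Sigma$ into two regions, and if one of them contains exactly one cone point (namely the loop's own basepoint lies on $\lambda$, not inside, so it's the cone points strictly enclosed that matter), the odd-split condition may fail and we must delete that loop. So the pruning rule is: delete a loop precisely when keeping it would create a complementary region with a bad parity; otherwise keep it.

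The counting heart of the argument: I would show that in the worst case each "unit" of three consecutive vertices contributes at least one retained edge-or-loop. Partition the vertices of $\cG$ according to which component they lie in. For a tree or looped tree on $v \ge 2$ vertices we retain at least $v-1 \ge \tfrac{v}{2} \ge \tfrac{v}{3}\cdot\tfrac{3}{2}$ edges; even a bare loop (one vertex, $v=1$) might have to be deleted, but then it contributes $0$ retained arcs from $1$ vertex. The extreme case is exactly Example~\ref{exa:removeloops}: blocks of a two-vertex tree plus one loop around it, using $3$ vertices per block, where for each block we can keep only one of the two arcs (the edge), yielding rank $\tfrac{V}{3}$. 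I would prove that no configuration does worse than this by a discharging/averaging argument: assign to each retained arc a "charge" of $3$ and distribute it among the $\le 3$ vertices it is responsible for, showing every vertex receives charge $\ge 1$; summing gives (number of retained arcs) $\cdot 3 \ge V = 2g+2$, hence the rank is $\ge \lceil \tfrac{2g+2}{3}\rceil$. Then by Corollary~\ref{cor:odd_curve}, applied to the pruned graph $\cH$ (whose complementary regions all satisfy the odd-split condition by construction), the $\cH^{\#}$ curves form a partial homology basis of $H_1(S,\Z)$.

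The main obstacle I anticipate is the precise verification that the pruning rule can always be carried out so that \emph{every} complementary region of $\Sigma \smallsetminus \cH$ — not just the ones adjacent to deleted loops — contains a simple closed curve with an odd/odd split of the Weierstrass points. A tree component sitting alone contributes one complementary region containing all $2g+2$ cone points, which is fine; but once several components coexist, the face structure is more intricate, and a naive "keep all tree edges, delete bad loops" rule could still leave a small region pinched off between two trees. I expect the fix is to observe that trees and the arcs of $\cG$ are geodesic arcs arising from the disk-growing process, so the complementary regions are genuinely "large" (they each contain a positive area worth of $\Sigma$ and in fact at least two cone points), and to push the parity argument through by noting $2g+2$ is even so a global odd/odd split always exists and can be localized into any sufficiently large region. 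Making this rigorous — rather than hand-wavy — is where the real work lies, and I would expect the authors to isolate a small sublemma handling exactly the "every face sees two cone points after pruning" claim.
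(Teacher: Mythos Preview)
Your proposal has a genuine gap, and it is precisely the obstacle you flag at the end but do not resolve.

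First, a small but consequential misreading of the criterion in Corollary~\ref{cor:odd_curve}: a region $\Omega$ of $\Sigma\smallsetminus\cH$ admits an odd/odd splitting curve as soon as it contains (as a puncture) \emph{one} cone point $p$, since a small circle around $p$ lies in $\Omega$ and separates $\{p\}$ from the remaining $2g+1$ points. There is no ``at least two'' requirement and no parity obstruction on the number of cone points in $\Omega$. Conversely, if $\Omega$ contains no isolated cone point, there need not be any odd-splitting curve at all.

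Second, and this is the real issue, the rule ``keep all tree edges and delete only bad loops'' can fail outright. Take $\cG$ to consist of $g+1$ disjoint bones (trees with one edge and two vertices each) and no loops; this is exactly Case~6 of the paper's algorithm. Then $\Sigma\smallsetminus\cG$ is a single region, and any simple closed curve in it encloses a union of whole bones, hence an \emph{even} number of cone points. No odd split exists, so by Corollary~\ref{cor:odd_curve} the lifts do \emph{not} form a partial basis. You are forced to delete at least one tree edge. More generally, whenever every connected component of $\cH$ has an even number of vertices and there are no loops, the same obstruction arises. Your rule never deletes tree edges, so it cannot handle these configurations.

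The paper's proof takes a different route that guarantees the criterion by construction. It first discards the loop from every looped tree, then from every tree with $\ge 2$ edges deletes one leaf edge, thereby manufacturing an isolated vertex and recording the pair (remaining tree, freed vertex) as a \emph{paired block}. The remaining loops are then processed by a level-wise algorithm based on their nesting on the sphere: at each level a loop is deleted and the freed vertex is paired with a surviving bone or loop, so that every complementary region ends up containing an isolated vertex. The $\tfrac13$ bound then drops out because every block (singleton bone, singleton loop, or paired block) carries at least $\tfrac13$ as many arcs as vertices. Your discharging instinct is in the right spirit for this last count, but the pruning itself has to be redesigned to \emph{create} isolated vertices rather than merely avoid bad loops, and the level-based merging is where that work actually happens.
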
 
\begin{proof} In two preliminary steps we simplify $\cG$ and then continue by an algorithm.\\

\textbf{Preliminary step 1}:\; From each connected component $\cT$ of $\cG$ which is a looped tree (cf. Lemma \ref{lem:graph_nooroneloop}) we delete the loop.\\

For instance, if $\cT$ is as the fourth example in \textit{Figure \ref{fig:graphcomponents}} then the deletion of the loop yields the third example in that figure. This preliminary step  leaves us with a subgraph $\cG'$ of $\cG$ with $2g+2$ vertices having the property that \emph{each connected component of $\cG'$ is either a tree or a loop}. In particular, $\cG'$ has no isolated vertices.\\

\textbf{Preliminary step 2}:\; For each connected component $\cT$ of $\cG'$ that is a tree with $k\geq 2$ edges we delete an edge leading to a leaf.\\

Furthermore,  the remaining subtree $\cT'$ of $\cT$ with $k-1$ edges and the leaf $p$ that has become isolated are grouped into a pair $\{\cT',p\}$. The pair shall be called a \emph{paired block}.\\

Let $\cH$ be the subgraph of $\cG'$ resulting from preliminary step 2. Any connected component of $\cH$ that is not part of a paired block shall be called a \emph{singleton block}. It follows from the construction that a singleton block can only be either a loop or a tree with exactly one edge. Components of the latter type shall be called \emph{bones}. In the example of \textit{Figure \ref{fig:levels}}, for instance, there are three bones and three paired blocks. These are marked by dotted lines that surround them; these lines do not belong to the graph; the bone in the region of level 4 belongs to a paired block, the other two bones don't.

Arranging components into such blocks shall allow us to keep track of the number of edges and loops that remain in the pruning process.\\

The further pruning is now carried out in an algorithmic way. For this we let $L$ be the union of all loops of $\cH$. For the ease of exposition we assume that $L$ is not empty, otherwise the following algorithm has just one step.

We shall call  \emph{regions} the open connected components of $\Sigma \smallsetminus L$. We order them hierarchically into levels as follows. Since $\Sigma$ is a sphere, at least two regions are disks. We let $Q$ with boundary loop $\lambda_Q$ be one of them and set it to be at the highest or, speaking with \textit{Figure \ref{fig:levels}}, the \emph{outermost} level.

\begin{figure}[!ht]
\SetLabels
\L(.15*.90) $\Sigma$\\
\L(.62*.95) $Q$\\
\L(.67*.87) $\lambda_Q$\\
\L(.58*.85) $Q'$\\
\L(.41*.70) $\lambda$\\
\L(.34*.98) $\,4$\\
\L(.34*.82) $\,3$\\
\L(.25*.60) $\,2$\\
\L(.49*.60) $2$\\
\L(.30*.39) $1$\\
\L(.53*.39) $1$\\
\L(.54*.50) $\,0$\\
\endSetLabels
\AffixLabels{%
\centerline{%
\includegraphics[height=5.5cm]{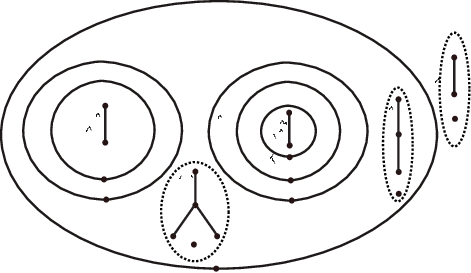}}}
\caption{Regions of level $0,1,2,3$ and $4$ on the sphere $\Sigma$. In the region of level 3 there are two paired blocks. This is indicated by dotted lines.}
\label{fig:levels}
\end{figure}

The second highest level is the region $Q'$ adjacent to $Q$ along $\lambda_Q$. We call $\lambda_Q$ its \emph{outer boundary}  and all other loops on the boundary of $Q'$ are called the \emph{inner boundary loops} (supposing for the ease of exposition that there are some, otherwise $Q'$ is already at the lowest level). For any inner boundary loop $\lambda$ of $Q'$ the region adjacent to $Q'$ along $\lambda$ is at the third highest level with $\lambda$ its outer boundary loop and again the remaining boundary loops being called the inner ones. In this way we proceed. In the end each region is at some level and, except for the highest and the lowest level, for any region of level $k$ there is one region of level $k+1$ adjacent to it along the outer boundary and there are regions of level $k-1$ adjacent along all inner boundary loops. We enumerate the levels such that the lowest level is 0. \textit{Figure \ref{fig:levels}} shows a case with levels going from 0 to 4.

The pruning algorithm now runs level wise starting with level 0. Every time an edge or a loop is deleted the resulting subgraph will be renamed $\cH$ again. When a loop $\lambda$ is deleted the region of the lower level adjacent to $\lambda$ together with $\lambda$ itself shall be merged to the adjacent region of the higher level and the new region thus formed assumes the higher of the two level numberings. Hence, the level numbering is not re-adjusted and some levels may eventually become void. During the algorithm some singleton blocks disappear and new paired blocks $\{\cT, p\}$ are formed, where $p$ is an isolated vertex and for $\cT$ we allow that
\begin{enumerate}
\item[--] $\cT$ is either a bone in the same region as $p$ or one of the inner boundary loops of the region that contains $p$.
\end{enumerate}
Paired blocks once created shall not be altered in subsequent steps. The result of the algorithm will be that in the end each region contains an isolated vertex. \\

\textbf{Algorithm:} Let now $k \geq 0$ be a level, $\cH$ the current version of the subgraph of $\cG'$ at the beginning of the new step of the algorithm and assume the \emph{induction hypothesis} that
\begin{enumerate}
\item[--] any region of level $\leq k-1$ contains an isolated vertex,
\item[--] $\cH$ is the disjoint union of singleton blocks and paired blocks,
\end{enumerate}
where we recall that each singleton block is either a bone or a loop. Let $\Omega$ be the region of level $k$ the algorithm is going to treat in the current step. By definition it acts according to the following nested  `IF-ELSE' instructions accompanied by short comments. Case 2. and 3. and Case 4. and 5. are illustrated in \textit{Figure \ref{fig:pruning_case23}} and \textit{Figure \ref{fig:pruning_case45}}, respectively. 

\begin{enumerate}
\item {\bf $\Omega$ contains an isolated vertex:} no action is needed.
\item {\bf $\Omega$ has at least two inner boundary loops:} delete one of them, merge the corresponding level $k-1$ region with $\Omega$ and create a paired block consisting of the vertex that has become isolated and one of the remaining inner boundary loops of $\Omega$. Observe that the latter does not yet belong to a paired block formed earlier, hence this operation is admissible.
\item {\bf $\Omega$ has exactly one inner boundary loop and contains at least one bone: } proceed as in 2.\ but take one of the bones in $\Omega$ to form the paired block. Note that since $\Omega$ contains no isolated vertex it contains no paired block with a bone. Hence the bone does not belong to a paired block formed earlier and the operation is admissible.

\begin{figure}[!ht]
\SetLabels
\L(.05*.90) Case 2.\\
\L(.22*.85) $\Omega$\\
\L(.50*.90) Case 3.\\
\L(.68*.85) $\Omega$\\
\endSetLabels
\AffixLabels{%
\centerline{%
\includegraphics[height=4cm,width=14cm]{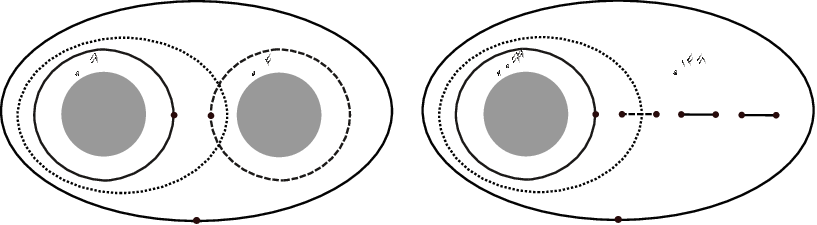}}}
\caption{Pruning: Cases 2 and 3: Paired blocks are indicated by dotted surrounding lines that are not part of the graph, deleted edges or loops are drawn as dashed lines. Gray fields indicate content of loops.}
\label{fig:pruning_case23}
\end{figure}

\item {\bf $\Omega$ has exactly one inner boundary loop but contains no bones: }  delete the outer boundary loop of $\Omega$, merge $\Omega$ with the adjacent $k+1$ level region and create a paired block using the inner boundary component and the freed vertex. (Since $\Omega$ contains neither bones nor paired blocks it must be either a disk or an annulus. But a disk is excluded by hyperbolic geometry, hence the existence of an outer boundary.)

\begin{figure}[!ht]
\SetLabels
\L(.05*.90) Case 4.\\
\L(.17*.60) $\Omega$\\
\L(.50*.90) Case 5.\\
\L(.63*.60) $\Omega$\\
\endSetLabels
\AffixLabels{%
\centerline{%
\includegraphics[height=4cm,width=14cm]{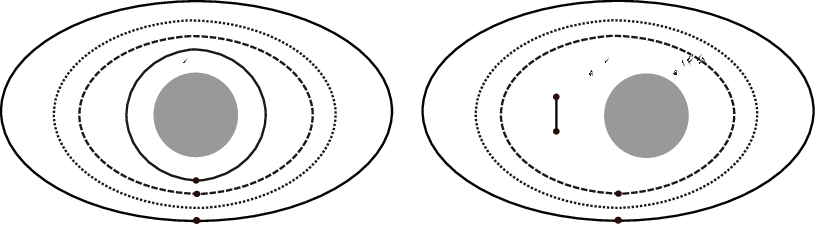}}}
\caption{Pruning: Cases 4 and 5}
\label{fig:pruning_case45}
\end{figure}

\item {\bf $\Omega$ has no inner boundary loop but an outer boundary loop:} proceed as in 4.\ but take one of the bones in $\Omega$ to form a paired block. (Since $\Omega$ contains no paired blocks and cannot be isometric to a hyperbolic disk it must contain cone points and thus bones.)

\item $\pmb{\Omega = \Sigma}$: This is the remaining case. By what is ruled out through the preceding cases, $\cH$ consists of $g+1$ bones and the instruction is to delete an edge and form two paired blocks,  possible because $g+1 \geq 3$.
\end{enumerate}

The algorithm stops after the highest level is treated. At that moment any connected component of $\Sigma \smallsetminus \cH$ contains an isolated vertex (each being part of a paired block). Hence, the hypothesis of Corollary \ref{cor:odd_curve} is satisfied and $\cH^{\#}$ consists of homologically independent closed curves. Furthermore, $\cH$ is the disjoint union of paired blocks and singleton blocks. In each block the number of edges or loops is at least $\frac{1}{3}$ times the number of vertices. Hence, $\cH$ has at least $\frac{1}{3}(2g+2)$ edges and loops. The lemma now follows from Corollary \ref{cor:odd_curve}. 
\end{proof}

We conclude, re-enumerating the geodesics, for simplicity:

\begin{thm} Let $S$ be a hyperelliptic Riemann surface of genus $g \geq 2$. Then there exist $\lceil \frac{2g+2}{3}  \rceil$ geodesic loops  $(\alpha_k)_{k=1,\ldots,\left \lceil \frac{2g+2}{3}\right \rceil}$, forming a partial homology basis such that
\begin{equation*}
\ell(\alpha_{k}) \leq  4\log \left(\frac{ 12(g-1) }{ 2g+5-3k} +2 \right) \text{ for all } k=1, \dots, \left\lceil \frac{2g+2}{3}\right \rceil.
\end{equation*}
\label{thm:hyper_lengths}
\end{thm}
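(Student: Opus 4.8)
The plan is to read the statement off by combining Lemma~\ref{thm:alpham} with Lemma~\ref{lem:2g3pruning}; the only substantive point is to control, for each \emph{retained} arc, the value of $j_m$ that appears in the estimate~\eqref{eq:alpham}. Recall that the disk‑growing procedure of Section~\ref{sec:loops} produces one arc $\delta_m$ per step, in bijection with the edges and loops of $\cG$, and that each $\delta_m$ yields a non‑separating geodesic $\alpha_m$ on $S$ with $\ell(\alpha_m)\le 4\log\big(\tfrac{4(g-1)}{2g+2-j_m}+2\big)$, where $j_m=\sum_{i<m}k_i$, $k_i\in\{1,2\}$, and $j_M\le 2g+1$ by the termination rule; for an edge $\delta_m$ the curve $\alpha_m$ is the simple closed lift $\delta_m^{*}$, and for a loop it is the geodesic freely homotopic to the half $\delta_m^{\#}$ used in $\cH^{\#}$. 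I would first invoke Lemma~\ref{lem:2g3pruning} to fix a subgraph $\cH$ with $N:=\lceil\tfrac{2g+2}{3}\rceil$ edges and loops whose lifts $\cH^{\#}$ form a partial homology basis, and then pass to the $N$ geodesics $\alpha_{m_1},\dots,\alpha_{m_N}$ with $m_1<\dots<m_N$ carried by these arcs; since their homology classes are $N$ of the classes in that partial basis, they again form a partial basis.

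The key estimate is that the $k$‑th survivor cannot have consumed too many Weierstrass points. Distinct steps consume pairwise disjoint, nonempty sets of Weierstrass points, so $j_{m_{i+1}}\ge j_{m_i}+k_{m_i}\ge j_{m_i}+1$, and $j_{m_N}\le j_M\le 2g+1$ because $m_N\le M$; hence $j_{m_k}\le 2g+1-(N-k)$ for every $k$. Therefore
\[
2g+2-j_{m_k}\;\ge\;N+1-k\;\ge\;\tfrac{2g+2}{3}+1-k\;=\;\tfrac{2g+5-3k}{3}\;>\;0,
\]
where positivity uses $k\le N\le\tfrac{2g+4}{3}$. Feeding this into the bound for $\ell(\alpha_{m_k})$ and using that $\log$ is increasing would give $\ell(\alpha_{m_k})\le 4\log\big(\tfrac{12(g-1)}{2g+5-3k}+2\big)$, and relabelling $\alpha_k:=\alpha_{m_k}$ finishes the proof.

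The one place needing care — and really the heart of the matter — is the inequality $j_{m_k}\le 2g+1-(N-k)$: although the pruning of Section~\ref{sec:prune} may delete arcs from anywhere along the sequence, each of the $N-k$ arcs retained after the $k$‑th one still swallows at least one further Weierstrass point, which pins down its index. It is precisely the pruning ratio $N\ge\tfrac13(2g+2)$ of Lemma~\ref{lem:2g3pruning} that converts this into the denominator $2g+5-3k$ of the statement; a weaker ratio would only yield a weaker constant. No geometric input beyond \eqref{eq:alpham} and the elementary inequality $\arccosh x\le\log(2x)$ already used there is required.
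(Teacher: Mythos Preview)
Your proposal is correct and follows essentially the same route as the paper: invoke Lemma~\ref{lem:2g3pruning} to extract $N=\lceil\frac{2g+2}{3}\rceil$ arcs, order the surviving indices $m_1<\dots<m_N$, and bound $j_{m_k}$ by telescoping $j_{m_{i+1}}\ge j_{m_i}+1$ together with $j_{m_N}\le j_M\le 2g+1$, then plug into Lemma~\ref{thm:alpham}. Your bookkeeping is in fact slightly tighter than the paper's sketch (you use $j_M\le 2g+1$ rather than $\le 2g+2$, which is what actually yields the denominator $N+1-k$), and you make explicit the harmless identification between the loop half chosen for $\alpha_m$ and the one retained in $\cH^{\#}$.
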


Summarizing this result for a fraction of the homology basis, we have:

\begin{cor} Let $S$ be a hyperelliptic Riemann surface of genus $g \geq 2$. Then for any $\lambda \in (0,1)$ there exist $\lceil \lambda \cdot \frac{2}{3} g \rceil$ geodesic loops  $(\alpha_k)_{k=1,\ldots,\left \lceil \lambda \cdot \frac{2}{3}g \right \rceil}$, forming a partial homology basis such that
\begin{equation}\label{eq:hyper_lengths_cor}
\ell(\alpha_k) \leq N(\lambda) :=  4\log \left( \frac{ 6}{ 1-\lambda} +2 \right)   \text{ for all } k \in \{1 ,\ldots, \left \lceil   \lambda \cdot \frac{2}{3}g \right \rceil \}.
\end{equation}
\label{thm:hyper_lengths_cor}
\end{cor}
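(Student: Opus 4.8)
The plan is to derive Corollary \ref{thm:hyper_lengths_cor} directly from Theorem \ref{thm:hyper_lengths} by a simple monotonicity and counting argument, plus the inequality $g \geq 2$ to handle small-genus edge cases. First I would observe that the bound in Theorem \ref{thm:hyper_lengths} is increasing in $k$: since the loops $(\alpha_k)$ there are indexed so that the length bound $4\log\bigl(\tfrac{12(g-1)}{2g+5-3k}+2\bigr)$ grows with $k$ (the denominator $2g+5-3k$ decreases as $k$ increases and stays positive for $k \leq \lceil \tfrac{2g+2}{3}\rceil$), it suffices to control the bound at the largest index we use. So I would set $K := \lceil \lambda \cdot \tfrac{2}{3}g\rceil$ and note $K \leq \lceil \tfrac{2g+2}{3}\rceil$ whenever $\lambda < 1$ (this needs a brief check: $\lambda \cdot \tfrac{2}{3}g < \tfrac{2}{3}g < \tfrac{2g+2}{3}$, and one must be slightly careful with the ceilings, but $\lceil x \rceil \leq \lceil y \rceil$ follows from $x \leq y$), so the first $K$ loops from Theorem \ref{thm:hyper_lengths} are available and form a partial homology basis.

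Next I would bound $4\log\bigl(\tfrac{12(g-1)}{2g+5-3K}+2\bigr)$ from above by $N(\lambda) = 4\log\bigl(\tfrac{6}{1-\lambda}+2\bigr)$. Since $\log$ is increasing it is enough to show $\tfrac{12(g-1)}{2g+5-3K} \leq \tfrac{6}{1-\lambda}$, i.e. $2(g-1)(1-\lambda) \leq 2g+5-3K$. Using $K = \lceil \lambda \cdot \tfrac{2}{3}g \rceil \leq \lambda \cdot \tfrac{2}{3}g + 1$, we get $2g+5-3K \geq 2g+5 - 2\lambda g - 3 = 2g+2-2\lambda g = 2(g-1)(1-\lambda) + (2 - 2\lambda) + 2\lambda \cdot\text{(correction)}$; the cleanest route is to expand $2(g-1)(1-\lambda) = 2g - 2 - 2\lambda g + 2\lambda$ and compare with $2g + 2 - 2\lambda g$, whose difference is $2 - (-2) - 2\lambda = 4 - 2\lambda > 0$. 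Hence the desired inequality holds with room to spare, and in particular $2g+5-3K > 0$, which retroactively justifies that all $K$ loops indeed come from the valid range in Theorem \ref{thm:hyper_lengths}.

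The only subtlety — and the step I expect to need the most care — is the interaction of the two ceiling functions together with the requirement that $2g+5-3K$ stays strictly positive (so that the bound in Theorem \ref{thm:hyper_lengths} is finite and the loops genuinely exist). One must verify $K \leq \lceil \tfrac{2g+2}{3}\rceil$ rather than merely $K \leq \tfrac{2g+2}{3}$; this is immediate from monotonicity of the ceiling applied to $\lambda \cdot \tfrac{2}{3}g \leq \tfrac{2g+2}{3}$ (valid since $\lambda < 1$ gives $\lambda \cdot \tfrac{2}{3}g < \tfrac{2}{3}g \leq \tfrac{2g+2}{3}$). With $K$ in the admissible range, the first $K$ of the $\alpha_k$ from Theorem \ref{thm:hyper_lengths} form a partial homology basis, and by the length estimate above each satisfies $\ell(\alpha_k) \leq \ell(\alpha_K)\text{'s bound} \leq N(\lambda)$, which is exactly the assertion of the corollary after re-indexing. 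Everything else is routine arithmetic, so I would present the corollary's proof in just a few lines.
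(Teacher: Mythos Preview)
Your proposal is correct and follows essentially the same approach as the paper: both derive the corollary directly from Theorem~\ref{thm:hyper_lengths} by observing that $K=\lceil \lambda\cdot\tfrac{2}{3}g\rceil$ lies in the admissible range and then bounding the fraction $\tfrac{12(g-1)}{2g+5-3K}$ by $\tfrac{6}{1-\lambda}$ via the elementary estimate $K<\lambda\cdot\tfrac{2g+2}{3}+1$ (the paper) or equivalently $K\le \lambda\cdot\tfrac{2g}{3}+1$ (your version). The paper's own proof is a single sentence; your write-up simply spells out the arithmetic that the paper leaves implicit.
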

  
\begin{proof}[Proof of Theorem \ref{thm:hyper_lengths}] 

With the procedure in Section \ref{sec:loops} we found simple closed geodesics $\alpha_m$, $m=1,\dots,M$, on $S$ with lengths bounded above by Lemma \ref{thm:alpham}, to be recalled below, and by Lemma \ref{lem:2g3pruning} there exists a selection

\begin{equation*}
\alpha_{m_1}, \alpha_{m_2}, \dots, \alpha_{m_{\kappa(g)}}, \quad \kappa(g) := \left\lceil \frac{2g+2}{3}\right\rceil
\end{equation*}
from these that forms a partial basis. We take the enumeration such that $m_1 < m_2 < \dots < m_{\kappa(g)}$. We then conclude with Lemma \ref{thm:alpham} that

\begin{equation*}
\ell(\alpha_{m_k}) \leq 4\log \left(\frac{ 4(g-1) }{ 2g+2-j_{m_k}} +2 \right),
\end{equation*}
where the $j_{m_k}$ are monotonically increasing and $j_{m_{\kappa(g)}} \leq 2g+2$. From this it follows that $j_{m_k}\leq 2g+2 - \kappa(g) + k$ and, hence

\begin{equation*}
\ell(\alpha_{m_k}) \leq 4\log \left(\frac{ 4(g-1) }{ \kappa(g)+1-k} +2 \right) \leq 4\log \left(\frac{ 12(g-1) }{ 2g+5-3k} +2 \right),\quad k=1, \dots, \left\lceil \frac{2g+2}{3}\right \rceil.
\end{equation*}
In our statement the curves $\alpha_{m_k}$ are renamed $\alpha_k$. This proves Theorem \ref{thm:hyper_lengths}.  
\end{proof}

Taking $\lambda \in (0,1)$ and restricting $k$ to $k \leq \left\lceil \lambda \cdot \frac{2g}{3}\right \rceil < \lambda \cdot \frac{2g+2}{3}+1$ we get the statement of Corollary \ref{thm:hyper_lengths_cor}, where $2g+2$ is simplified to $2g$.

\section{Jacobian of hyperelliptic surfaces} \label{sec:Jac_hyp}

In this part we apply our results to the study of the Jacobian torus $J(S)$ of a Riemann surface $S$. The following approach to the Jacobian can also be found in \cite{bs} or \cite{bps}. In what follows a differential form or 1-form on a Riemann surface $S$ is understood to be a \emph{real} differential 1-form. The \textit{energy} of a 1-forms $\omega$ of class $L^2$ on $S$ is given by

\begin{equation*}\label{eq:scalpr}
        E(\omega) =  \int_S \omega \wedge \star \omega,
\end{equation*}
where $ \wedge $ denotes the wedge product and $\star$ the Hodge star operator. When $S$ is compact, then in each cohomology class of closed 1-forms on $S$ there is a unique harmonic form and the latter is an energy minimizer, i.e.\  the harmonic form is the unique element in its cohomology class that has minimal energy.

The tool here is the approximation of the energy of a harmonic form in a cylinder. By the Collar Lemma for hyperbolic surfaces, from \cite[p.\ 106]{bu},  any simple closed geodesic $\gamma$ on $S$ is embedded in a cylindrical neighbourhood $C(\gamma)$, whose width $w_s$ is given by its length $\ell(\gamma)$:
\begin{equation}
 w_s =  \arcsinh \left( \frac{1}{\sinh \left(\ell(\gamma) / 2\right)} \right).
\label{eq:col_lemma2}
\end{equation}
Let $(\alpha_k)_{k=1,\ldots,2g}$ be a set of simple closed geodesics on $S$ that induce a basis of $H_1(S,\Z)$. In \cite{bs} an upper bound for the energy $E(\sigma_k)$ of a harmonic 1-form $\sigma_k \in H^1(S,\Z)$ is provided:
\begin{equation}
     E(\sigma_k) \leq \frac{\ell(\alpha_k)}{\pi-2 \arcsin \left(\frac{1}{\cosh(w)} \right)}
\label{eq:capa_omega}
\end{equation}
Here $w$ denotes the width of the cylinder $C$ around $\alpha_k$, where $w$ can be different, especially larger than, $w_s$. These $(\sigma_k)_{k=1,\ldots,2g}$ form a basis of $H^1(S,\Z)$. Furthermore $E(\sigma_k)$  is equal to the squared length $\|v_k\|^2$ of a lattice vector $v_k$ in $J(S)$. Again, these $(v_k)_{k=1,\ldots,2g}$ form a basis of the lattice. 
Let $S$ be a hyperelliptic hyperbolic surface of genus $g \geq 2$. By Corollary \ref{thm:hyper_lengths_cor} for any $\lambda \in (0,1)$ there exist $\lceil \lambda \cdot \frac{2}{3} g \rceil$ homologically independent loops $(\alpha_k)_{k=1,\ldots,\left \lceil \lambda \frac{2}{3}g \right \rceil}$, such that $\ell(\alpha_k) \leq N(\lambda)$, where $N(\lambda)$ is from \eqref{eq:hyper_lengths_cor}.

Using this result in \eqref{eq:col_lemma2}, we obtain an upper bound for the energy in \eqref{eq:capa_omega}, which also depends on $\lambda$. To this end we set
\[
           w(\lambda) =  \arcsinh \left( \frac{1}{\sinh(\frac{N(\lambda)}{2})} \right).
\]
In total we obtain the following corollary, which is Theorem \ref{thm:hyper_Jac_intro} of the introduction:

\begin{cor} Let $S$ be a hyperelliptic hyperbolic surface of genus $g \geq 2$. Then for any $\lambda \in (0,1)$ there exist $\lceil \lambda \cdot \frac{2}{3} g \rceil$ linearly independent vectors $(v_k)_{k=1,\ldots,\left \lceil \lambda \frac{2}{3}g \right \rceil}$  in the lattice of the Jacobian torus $J(S)$ that can be extended to a lattice basis, such that
\[
  \|v_k\|^2 = E(\sigma_k) \leq D(\lambda): = \frac{N(\lambda)}{\pi-2 \cdot \arcsin(\frac{1}{\cosh(w(\lambda)})}   \text{ \ for all \ } k \in \{1 ,\ldots, \left \lceil \lambda \frac{2}{3}g \right \rceil \}.
\]
\label{thm:hyper_cor2}
\end{cor}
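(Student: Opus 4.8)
The plan is to chain together the three prior results of the paper: Corollary \ref{thm:hyper_lengths_cor} (which gives us the short loops), the Collar Lemma in the form \eqref{eq:col_lemma2}, and the energy estimate \eqref{eq:capa_omega} from \cite{bs}. First I would invoke Corollary \ref{thm:hyper_lengths_cor} to obtain, for a fixed $\lambda \in (0,1)$, the $\lceil \lambda \cdot \frac{2}{3} g\rceil$ homologically independent geodesic loops $\alpha_1, \dots, \alpha_{\lceil \lambda \frac{2}{3} g\rceil}$ with $\ell(\alpha_k) \leq N(\lambda)$. Since these form a partial homology basis of $H_1(S,\Z)$, they may be completed to a full basis $(\alpha_k)_{k=1,\dots,2g}$, and then the discussion following \eqref{eq:capa_omega} supplies dual harmonic $1$-forms $\sigma_k \in H^1(S,\Z)$ whose energies equal the squared lengths $\|v_k\|^2$ of the corresponding lattice basis vectors $v_k$ in $J(S)$; in particular the first $\lceil \lambda \frac{2}{3} g\rceil$ of the $v_k$ can be extended to a lattice basis.

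Next I would bound the energy $E(\sigma_k)$ for $k \leq \lceil \lambda \frac{2}{3} g\rceil$. The estimate \eqref{eq:capa_omega} reads $E(\sigma_k) \leq \ell(\alpha_k)\big/\bigl(\pi - 2\arcsin(1/\cosh(w))\bigr)$, where $w$ is the width of \emph{any} embedded cylinder around $\alpha_k$; in particular one may take $w = w_s$, the collar width given by \eqref{eq:col_lemma2}. The right-hand side of \eqref{eq:capa_omega} is increasing in $\ell(\alpha_k)$ (both because the numerator grows and because $w_s$, hence $\arcsin(1/\cosh(w_s))$, shrinks as $\ell(\alpha_k)$ grows, making the denominator larger — so actually I should be slightly careful: a longer geodesic gives a \emph{narrower} collar, i.e.\ smaller $w_s$, i.e.\ larger $1/\cosh(w_s)$, i.e.\ larger $\arcsin$ term, i.e.\ \emph{smaller} denominator, which increases the bound). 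Thus the bound is monotone increasing in $\ell(\alpha_k)$, so substituting the uniform upper bound $\ell(\alpha_k) \leq N(\lambda)$ and the corresponding collar width $w(\lambda) = \arcsinh\bigl(1/\sinh(N(\lambda)/2)\bigr)$ yields
\[
E(\sigma_k) \;\leq\; \frac{N(\lambda)}{\pi - 2\arcsin\!\bigl(\tfrac{1}{\cosh(w(\lambda))}\bigr)} \;=\; D(\lambda)
\]
for all $k \in \{1, \dots, \lceil \lambda \frac{2}{3} g\rceil\}$, which is exactly the claimed inequality.

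The one point that requires genuine care — and which I expect to be the only real obstacle — is the monotonicity argument just sketched: one must verify that replacing $\ell(\alpha_k)$ by its upper bound $N(\lambda)$ (and simultaneously replacing the collar width $w_s$ by the smaller value $w(\lambda)$ coming from that larger length) only increases the right-hand side of \eqref{eq:capa_omega}, so that the substitution is legitimate. Concretely, since $t \mapsto \arcsinh(1/\sinh(t/2))$ is strictly decreasing on $(0,\infty)$ and $t \mapsto \arcsin(1/\cosh(t))$ is strictly decreasing on $(0,\infty)$, the composite $\ell \mapsto \arcsin\bigl(1/\cosh(w_s(\ell))\bigr)$ is strictly increasing, hence $\ell \mapsto \pi - 2\arcsin(1/\cosh(w_s(\ell)))$ is strictly decreasing and positive (one should record that it stays positive, which holds because $\arcsin(1/\cosh(\cdot)) < \pi/2$), so the quotient $\ell \mapsto \ell/(\pi - 2\arcsin(1/\cosh(w_s(\ell))))$ is strictly increasing; feeding in $\ell(\alpha_k) \leq N(\lambda)$ gives the result. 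Everything else is a direct citation of \eqref{eq:col_lemma2}, \eqref{eq:capa_omega} and Corollary \ref{thm:hyper_lengths_cor}, together with the standard fact that a partial homology basis of simple closed curves extends to a full one with a dual lattice basis in the Jacobian. I would then remark that $D(\lambda)$ depends only on $\lambda$ and not on the genus, completing the proof.
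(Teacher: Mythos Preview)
Your proposal is correct and follows exactly the route the paper takes: invoke Corollary~\ref{thm:hyper_lengths_cor} for the short partial homology basis, feed the length bound $N(\lambda)$ into the Collar Lemma \eqref{eq:col_lemma2} to get $w(\lambda)$, and then apply the energy estimate \eqref{eq:capa_omega} to obtain $D(\lambda)$. The paper actually leaves the monotonicity substitution implicit, so your explicit check that $\ell \mapsto \ell/\bigl(\pi - 2\arcsin(1/\cosh(w_s(\ell)))\bigr)$ is increasing is a welcome clarification rather than a departure.
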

Note that the the upper bound $D(\lambda)$ is a constant that does not depend on the genus.

\section*{Acknowledgment}

The authors would like to thank the referee for his/her careful reading of the manuscript and the very helpful comments which have greatly improved this paper.

\newpage

\vspace{1cm}

\noindent Peter Buser \\
\noindent Department of Mathematics, Ecole Polytechnique F\'ed\'erale de Lausanne\\
\noindent Station 8, 1015 Lausanne, Switzerland\\
\noindent e-mail: \textit{peter.buser@epfl.ch}\\
\\
\\
\noindent Eran Makover\\
\noindent Department of Mathematics, Central Connecticut State University\\
\noindent 1615 Stanley Street, New Britain, CT 06050, USA\\
\noindent e-mail: \textit{makovere@ccsu.edu}\\
\\
\\
\noindent Bjoern Muetzel \\
\noindent Department of Mathematics, Eckerd College \\
\noindent 4200 54th avenue South, St. Petersburg, FL 33711, USA\\
\noindent e-mail: \textit{bjorn.mutzel@gmail.com}\\

\end{document}